\documentclass{article}
\usepackage{graphicx}
\usepackage{amsmath}
\usepackage{amssymb}
\usepackage{optidef}
\usepackage{booktabs}
\usepackage{algpseudocode}
\usepackage{amsmath}
\usepackage{amsthm}
\usepackage{amssymb}
\usepackage{amsfonts}
\usepackage{float}
\newtheorem{theorem}{Theorem}

\newtheorem{lemma}[theorem]{Lemma}

\usepackage[most]{tcolorbox}

\newenvironment{breakablealgorithm}[1]{
    \begin{tcolorbox}[
        breakable,
        enhanced,
        colback=white,
        colframe=black,
        arc=0mm,
        outer arc=0mm,
        boxrule=0.5pt,
        top=1mm, bottom=1mm, left=1mm, right=1mm,
        title={#1},
        colbacktitle=white,
        coltitle=black,
        fonttitle=\bfseries,
        attach boxed title to top left={yshift=-2mm, xshift=2mm},
        boxed title style={empty}
    ]
}{
    \end{tcolorbox}
}

\usepackage[backend=biber, style=numeric]{biblatex} 
\title{ADACT: PDE-Constrained Thermal Hardware/Software Co-Design Adjoint-Driven Optimal Control via Finite Difference Discretization}
\author{
  Don Li \\
  Dept. Mathematics \& Statistics \\
  Portland State University \\
  \texttt{don@pdx.edu}
}
\date{\today}

\begin{document}

\maketitle

\begin{abstract}
By virtue of the sheer compute demands of large-scale artificial intelligence (AI) and high-performance computing (HPC), modern multi-core microprocessors suffer from severe power density limits and thermal throttling, entailing an urgent need for principled methods for predictive, physics-informed dynamical thermal management (DTM). Here, we introduce a hardware/software co-design physics-informed PDE-constrained DTM throttling optimal control algorithm, ADACT (Adjoint-Driven Adaptive Co-Design Thermal Control), that minimizes thermal risk and cooling energy overhead while maximizing core execution throughput. The underlying physical temperature field is governed by a two-dimensional nonhomogeneous parabolic reaction-diffusion equation with spatially localized, time-varying power source terms and variable convective cooling subject to nonhomogeneous Dirichlet boundary conditions. We employ a discretization of the continuous state PDE via a Crank-Nicolson finite difference scheme with a 5-point discrete Laplacian operator on a uniform spatial mesh, which preserves unconditional numerical stability and $O(\Delta x^2 + \Delta y^2 + \Delta t^2)$ second-order temporal and spatial accuracy. To efficiently compute exact gradients, we construct and implement a continuous backward-in-time adjoint state method with L-BFGS as our optimization algorithm of choice. The accuracy and second-order convergence rates of both forward state and backward adjoint solvers are quantitatively verified via the Method of Manufactured Solutions (MMS), and gradient accuracy is validated using directional derivative Taylor tests. Numerical experiments demonstrate that ADACT dynamically throttles software power workloads and modulates hardware cooling efforts to strictly uphold physical thermal thresholds while maintaining operational throughput.
\end{abstract}

\noindent
\textbf{Keywords} PDE-constrained optimization, Adjoint state method, Hardware/software co-design, Thermal management, Crank-Nicolson method \\
\\
\textbf{AMS Subject Classifications:} Primary 49M41; Secondary 49K20, 65M06, 35K57, 93C20, 68M07 

\section{Introduction}

In the era of modern high-performance computing (HPC) and large-scale artificial intelligence (AI) workloads, surmounting memory and energy efficiency bottlenecks is a pressing challenge \cite{Gov+26}, \cite{VZ26}, \cite{SGM19}. Microprocessor design has increasingly encountered physical hardware constraints bounded by thermal dissipation and power density limits \cite{BM01}. Dynamic power consumption in multi-core integrated circuit dies scales rapidly with clock frequency and active core utilization \cite{BJ08}. In practice, operating temperatures exceeding a critical thermal threshold $T_{\max}$ accelerate physical hardware degradation, degrade structural reliability, or trigger emergency dynamic thermal management (DTM) hardware throttling. Consequently, balancing operational computational throughput with adherence to hard thermal safety constraints represents a fundamental hardware/software co-design challenge in contemporary computer architecture. \\
\\
Traditional industrial thermal management techniques rely predominantly on reactive heuristics, localized rule-based control, or offline lookup tables generated during pre-production stress testing \cite{CSRG09}. While computationally lightweight to implement in real-time firmware, these localized heuristics operate reactively rather than predictively. Traditional thermal throttling algorithms fail to account for continuous physical spatial diffusion, temporal thermal lag, or nonhomogeneous heat interactions among adjacent processor cores. Conversely, modeling dynamic chip temperature as a spatio-temporal continuous field governed by partial differential equations (PDEs) enables predictive, physics-informed optimal control strategies. By constraining a performance cost functional $J$ directly by the underlying heat equation, software execution clock rates and dynamic cooling actuators can be optimized simultaneously to maintain thermal compliance while maximizing execution capacity, hence motivating our thermal hardware/software co-design optimal control paradigm. \\
\\
However, PDE-constrained optimization problems present significant numerical and computational challenges, particularly when discretizing multi-dimensional transient domains over extended time horizons. Standard black-box numerical optimization algorithms or direct finite difference gradient approximations scale poorly with control variable dimension, whereby evaluating numerical gradients via forward finite difference perturbations requires $O(N_{\text{controls}})$ full forward PDE evaluations per optimization step \cite{TKK26}, \cite{KTK26}, rendering high-resolution thermal control via such methods computationally intractable. We sidestep this via the \textit{adjoint state method} \cite{Ple06}, which does so by deriving a continuous backward-in-time adjoint PDE (system) -- by executing a single forward state solve followed by a single backward adjoint solve, exact analytical gradient vectors $\nabla J$ with respect to all spatial and temporal control variables are evaluated at a computational cost independent of the control space dimension. This renders the adjoint state method particularly useful as a numerical technique for inverse problems and large-scale PDE-constrained optimization across science and engineering \cite{STY24}, \cite{DCS00}, \cite{XDW21}. \\
\\
Here, we formulate, discretize, and verify a continuous space-time adjoint framework for hardware/software co-design thermal optimal control on a two-dimensional multi-core processor die. The underlying thermal field is governed by a nonhomogeneous parabolic reaction-diffusion equation featuring spatially localized power sources, dynamic convective cooling parameters, and nonhomogeneous Dirichlet boundary conditions. To ensure numerical stability and second-order temporal and spatial accuracy $O(\Delta x^2 + \Delta y^2 + \Delta t^2)$, the governing forward state equation and backward adjoint PDE are discretized via an implicit Crank--Nicolson finite difference scheme \cite{CN47} using a 5-point discrete Laplacian stencil. Due to the operator symmetry of the discrete spatial Laplacian under Dirichlet boundary conditions, the discrete backward adjoint linear system reuses the exact coefficient matrices of the forward solver, maximizing computational implementation efficiency.

\subsection{Summary of Contributions}

\noindent
Our main contributions are organized as follows:

\begin{itemize}
    \item \textbf{Theoretical Framework:} We formalize a continuous hardware/software co-design optimal control problem constrained by a two-dimensional nonhomogeneous parabolic reaction-diffusion equation with localized indicator functions and dynamic cooling control inputs.
    \item \textbf{Symmetric Crank--Nicolson Discretization:} Using the Crank-Nicolson method, we detail the implicit space-time finite difference discretization for the forward state system.
    \item \textbf{Adjoint Sensitivity Analysis:} We derive the continuous Lagrangian functional, backward-in-time continuous adjoint PDE, terminal/boundary conditions, and analytical control gradient formulas for software core power densities and hardware cooling levels.
    \item \textbf{Quantitative Verification:} We verify our numerical implementation using the Method of Manufactured Solutions (MMS) to confirm $2.0$ spatial and temporal convergence rates for forward state and adjoint solvers and validate discrete gradient accuracy using directional derivative Taylor tests.
    \item \textbf{Thermal Control Numerical Experiments:} We present multi-core workload simulation studies illustrating how our adjoint-driven L-BFGS \cite{LN89} optimization framework dynamically throttles software power and adjusts convective cooling to prevent physical hotspots while upholding processing throughput.
\end{itemize}

\section{Theoretical Framework}

\noindent
We consider a bounded, rectangular spatial domain \(\Omega = [0, L_x] \times [0, L_y] \subset \mathbb{R}^2\) representing the two-dimensional surface of a silicon multi-core processor die. Over a finite temporal horizon \(t \in [0, t_f]\), the continuous evolution of the chip’s temperature field \(T(x,y,t)\) is governed by a nonhomogeneous parabolic reaction-diffusion equation

\begin{equation}
\frac{\partial T}{\partial t} = \alpha\nabla^2T- \beta\gamma(t)(T - T_{\text{amb}}) + \sum_{c=1}^{N}P_c(t)\chi_{\Omega_c}(x,y)\quad \forall(x, y) \in \Omega,
\end{equation}

\noindent
where \(\alpha > 0\) denotes the thermal diffusivity of silicon, \(\beta > 0\) is the convective heat transfer coefficient governed by the hardware control variable $\gamma(t)$, $T_{\text{amb}}$ is the ambient environment temperature, \(P_c(t)\) represents the execution clock frequency and power density of core \(c\) over $N$ number of cores, with indicator function

\begin{equation}
\chi_{\Omega_c}(x,y) = \begin{cases} 
1 & \text{if } (x,y) \in \Omega_c, \\ 
0 & \text{if } (x,y) \notin \Omega_c.
\end{cases}
\end{equation}

\noindent
$P_c(t)$ and $\gamma(t)$ are the \textit{control variables} or \textit{decision parameters} of our system. To isolate the heat dissipation through the perimeter to a highly conductive metal heat-sink casing held at equilibrium, we prescribe a nonhomogeneous Dirichlet boundary condition

\begin{equation}
T(x,y,t) = T_0 \quad \text{for } (x,y) \in \partial \Omega.
\end{equation}

\noindent
For our initial condition, we consider a system initialized at uniform ambient temperature \(T(x,y,0) = T_0\). To formalize the co-design objective, we define the tracking and regularization cost functional $J(T, P_c, \gamma)$ to be minimized over the control horizon

\begin{equation}
\begin{split}
J(T, P_c, \gamma) = \int_0^{t_f} \sum_{c=1}^{N} w_1 \left( P_c(t) - P_{\text{target}} \right)^2 dt + \int_0^{t_f} w_2 \gamma^2 dt + \int_0^{t_f} \\ \iint_{\Omega} w_3 \left( T(x,y,t) - T_0 \right)^2 dx \, dy \, dt,
\end{split}
\end{equation}

\noindent
where $P_{\text{target}}$ is the maximum desired operational core power density, and $w_1, w_2, w_3 > 0$ are user-defined weighting (hyper)parameters that balance the trade-offs between software execution throughput, hardware cooling energy consumption, and thermal safety limits. The constituent integrals on the right-hand side of (4) correspond to a standard linear-quadratic tracking and state-control regularization formulation over a fixed time horizon \cite{Zab92}. Note that the primary objective of our optimal control problem is to keep the operational core power density \(P_c(t)\) as close as possible to the target maximum operational capacity \(P_{\text{target}}\). We ensure deviations from $P_{\text{target}}$ are penalized equally whether they are positive or negative via a quadratic loss function \((P_c(t) - P_{\text{target}})^2\). We then sum this across all $N$ cores and integrate over the entire control horizon from 0 to \(t_{f}\), which yields the total throughput performance error 

\begin{equation}
\int_0^{t_f} \sum_{c=1}^{N} w_1 \left( P_c(t) - P_{\text{target}} \right)^2 dt.
\end{equation}

\noindent
To enforce a penalty on overly aggressive control effort, a L2/Tikhonov regularization term \cite{Gol+99} is introduced, $\gamma^2$. Integrating this over the control horizon minimizes the energy spent on cooling modifications and smooths out the control actions, yielding

\begin{equation}
\int_0^{t_f} w_2 \gamma^2 dt.
\end{equation}

\noindent
Note that temperature $T(x,y,t)$ is a continuous spatial field across the physical domain $\Omega$. To prevent hardware degradation or thermal throttling, the system must penalize temperatures that deviate from a safe baseline operating temperature $T_0$. Because temperature varies by location $(x,y)$ and time $t$, the quadratic penalty $(T(x,y,t) - T_0)^2$ must be integrated over both the space domain $\Omega$ and the time domain \([0,t_{f}]\), which yields the last term on the right-hand side,

\begin{equation}
\int_0^{t_f} \int_{\Omega} w_3 \left( T(x,y,t) - T_0 \right)^2 dx \, dy \, dt.
\end{equation}

\noindent
We thus have the following PDE-constrained optimization problem:

\begin{equation*}
\begin{aligned}
&\!\min_{P_c, \gamma} \quad \int_{0}^{t_f} \sum_{c=1}^{N} w_1 \left(P_c(t) - P_{\text{target}}\right)^2 dt + \int_{0}^{t_f} w_2 \gamma(t)^2 \, dt \\
&\hspace{3.5em} + \int_{0}^{t_f} \iint_{\Omega} w_3 \left(T(x,y,t) - T_0\right)^2 dx \, dy \, dt, \\
&\text{s.t.} \quad \frac{\partial T}{\partial t} - \alpha \nabla^2 T + \beta \gamma(t) (T - T_{\text{amb}}) \\
&\hspace{3.5em} = \sum_{c=1}^{N} P_c(t) \chi_{\Omega_c}(x,y), \quad (x,y) \in \Omega, \; t \in (0, t_f], \\
&\hspace{1.3em} T(x,y,t) = T_0, \quad (x,y) \in \partial\Omega, \; t \in [0, t_f], \\
&\hspace{1.3em} T(x,y,0) = T_0, \quad (x,y) \in \Omega, \\
&\hspace{1.3em} P_{\min} \le P_c(t) \le P_{\max}, \quad c = 1, \dots, N, \; t \in [0, t_f], \\
&\hspace{1.3em} \gamma_{\min} \le \gamma(t) \le \gamma_{\max}, \quad t \in [0, t_f].
\end{aligned}
\end{equation*}

\section{Forward State Solver: Crank-Nicolson}

\noindent
The forward parabolic heat equation is discretized using the second-order Crank-Nicolson scheme. With respect to our temporal component, the control horizon $[0, t_f]$ is discretized into $N_t$ uniform time intervals of size $\Delta t = \frac{t_f}{N_t}$, defining discrete time levels $t_n = n \Delta t$ for $n = 0, 1, \dots, N_t$. With respect to our spatial component, we have the two-dimensional Laplacian operator $\nabla^2T \triangleq \frac{\partial^2T}{\partial x^2} + \frac{\partial^2T}{\partial y^2}$. Crank-Nicolson applies a centered difference approach \cite{Smi85}. For uniform mesh with step size $h$ and node $x_i$, the centered difference approximation of the second derivative takes the general form

\begin{equation}
f"(x_i) \approx \frac{f(x_{i+1})-2f(x_i)+f(x_{i-1})}{h^2}
\end{equation}

\noindent
with error term $O(h^2)$. With this centered difference approximation, we can construct approximations of the second spatial derivatives at arbitrary grid point \((i, j)\) with spacing \(\Delta x\) and \(\Delta y\), i.e., 

\begin{equation}
\frac{\partial^2 T}{\partial x^2} \approx \frac{T_{i+1,j}-2T_{i_j} + T_{i-1,j}}{\Delta x^2}, \qquad \frac{\partial^2 T}{\partial y^2} \approx \frac{T_{i,j+1}-2T_{i_j} + T_{i,j-1}}{\Delta y^2}.
\end{equation}

\noindent
Since $\nabla^2T \triangleq \frac{\partial^2T}{\partial x^2} + \frac{\partial^2T}{\partial y^2}$, our approximation with standard 5-point central difference stencil $L$ is

\begin{equation}
L T_{i,j} = \frac{T_{i+1,j}-2T_{i_j} + T_{i-1,j}}{\Delta x^2} +  \frac{T_{i,j+1}-2T_{i_j} + T_{i,j-1}}{\Delta y^2}.
\end{equation}

\noindent
The forward state step from $t_n$ to $t_{n+1}$ will form a linear system 

\begin{equation}
A^n \mathbf{T}^{n+1} = B^n \mathbf{T}^n + \mathbf{f}^n,
\end{equation}

\noindent
where $\mathbf{f}^n$ is the \textit{forcing term} or \textit{source term}. We derive each of $A^n$, $B^n$, and $\mathbf{f}^n$. Recall our PDE constraint $\frac{\partial T}{\partial t} = \alpha\nabla^2T- \beta\gamma(t)(T - T_{\text{amb}}) + \sum_{c=1}^{N}P_c(t)\chi_{\Omega_c}(x,y)\quad \forall(x, y) \in \Omega, t \in [0,t_f]$, where \(\alpha > 0\) denotes the thermal diffusivity of silicon, \(\beta > 0\) is the convective heat transfer coefficient governed by the hardware control variable $\gamma(t)$, $T_{\text{amb}}$ is the ambient environment temperature, \(P_c(t)\) represents the execution clock frequency and power density of core \(c\) over $N$ number of cores, with indicator function $\chi_{\Omega_c}(x,y)$ subject to nonhomogeneous Dirichlet boundary condition $T(x,y,t) = T_0$ for $(x,y) \in \partial\Omega$ and uniform initial condition $T(x,y,0) = T_0$. To achieve second-order temporal accuracy $O(\Delta t^2)$ and unconditional stability, we apply the Crank-Nicolson finite difference scheme across time interval $[t_n, t_{n+1}]$, where $\Delta t = t_{n+1} - t_n$. In Crank-Nicolson, the temporal derivative on the left-hand side is approximated using a central difference across $\Delta t$, while all spatial operators, continuous states, and forcing terms on the right-hand side are evaluated at the temporal midpoint $t_{n+1/2} = t_n + \frac{\Delta t}{2}$ via an arithmetic midpoint average

\begin{equation}
\frac{T^{n+1} - T^n}{\Delta t} = \alpha \left( \nabla^2 T \right)^{n+1/2} - \beta \gamma^{n+1/2} \left( T^{n+1/2} - T_{\text{amb}} \right) + \sum_{c=1}^{N} P_c\left(t_{n+1/2}\right) \chi_{\Omega_c}(x,y),
\end{equation}

\noindent
where $T^{n+1/2} \approx \frac{1}{2}\left(T^{n+1} + T^n\right)$ and $\left(\nabla^2 T\right)^{n+1/2} \approx \frac{1}{2}\left(\nabla^2 T^{n+1} + \nabla^2 T^n\right)$. Substituting these midpoint approximations yields

\begin{equation}
\begin{split}
\frac{T^{n+1} - T^n}{\Delta t} = \frac{\alpha}{2} \left( \nabla^2 T^{n+1} + \nabla^2 T^n \right) - \frac{\beta \gamma^{n+1/2}}{2} \left( T^{n+1} + T^n - 2T_{\text{amb}} \right) + \\ \sum_{c=1}^{N} P_c\left(t_{n+1/2}\right) \chi_{\Omega_c}(x,y).
\end{split}
\end{equation}

\noindent
We discretize the spatial domain using a uniform grid with mesh spacings $\Delta x$ and $\Delta y$. Replacing the spatial Laplacian $\nabla^2$ with our standard 5-point stencil, we have

\begin{equation}
\begin{split}
\frac{\mathbf{T}^{n+1} - \mathbf{T}^n}{\Delta t} = \frac{\alpha}{2} L \mathbf{T}^{n+1} + \frac{\alpha}{2} L \mathbf{T}^n - \\ \frac{\beta \gamma^{n+1/2}}{2} \left( \mathbf{T}^{n+1} + \mathbf{T}^n - 2 \mathbf{T}_{\text{amb}} \right) + \sum_{c=1}^{N} P_c\left(t_{n+1/2}\right) \boldsymbol{\chi}_{\Omega_c}.
\end{split}
\end{equation}

\noindent
Multiplying both sides of the equation by $\Delta t$ yields

\begin{equation}
\begin{split}
\mathbf{T}^{n+1} - \mathbf{T}^n = \frac{\alpha \Delta t}{2} L \mathbf{T}^{n+1} + \frac{\alpha \Delta t}{2} L \mathbf{T}^n - \frac{\Delta t \beta \gamma^{n+1/2}}{2} \mathbf{T}^{n+1} - \\ \frac{\Delta t \beta \gamma^{n+1/2}}{2} \mathbf{T}^n + \Delta t \beta \gamma^{n+1/2} \mathbf{T}_{\text{amb}} + \Delta t \sum_{c=1}^{N} P_c\left(t_{n+1/2}\right) \boldsymbol{\chi}_{\Omega_c},
\end{split}
\end{equation}

\noindent
and rearranging yields

\begin{equation}
\begin{split}
\left( I + \frac{\Delta t \beta \gamma^{n+1/2}}{2} I - \frac{\alpha \Delta t}{2} L \right) \mathbf{T}^{n+1} = \left( I - \frac{\Delta t \beta \gamma^{n+1/2}}{2} I + \frac{\alpha \Delta t}{2} L \right) \mathbf{T}^n + \\ \Delta t \left( \sum_{c=1}^{N} P_c\left(t_{n+1/2}\right) \boldsymbol{\chi}_{\Omega_c} + \beta \gamma^{n+1/2} \mathbf{T}_{\text{amb}} \right) + \mathbf{g}_{\text{boundary}}^n,
\end{split}
\end{equation}

\noindent
where $\mathbf{g}_{\text{boundary}}^n$ contains the nonhomogeneous Dirichlet boundary contributions folded out of the discrete Laplacian operator $L$ at time midpoint $t_{n+1/2}$. We thus have a linear system of the form of (11), where the coefficient matrices $A^n, B^n \in \mathbb{R}^{M \times M}$ and forcing vector $\mathbf{f}^n \in \mathbb{R}^M$ are, respectively,

\begin{align}
A^n &= \left(1 + \frac{\Delta t \beta \gamma^{n+1/2}}{2}\right) I - \frac{\alpha \Delta t}{2} L, \label{eq:matrix_A} \\[1ex]
B^n &= \left(1 - \frac{\Delta t \beta \gamma^{n+1/2}}{2}\right) I + \frac{\alpha \Delta t}{2} L, \label{eq:matrix_B} \\[1ex]
\mathbf{f}^n &= \Delta t \left( \sum_{c=1}^{N} P_c\left(t_{n+1/2}\right) \boldsymbol{\chi}_{\Omega_c} + \beta \gamma^{n+1/2} \mathbf{T}_{\text{amb}} \right) + \mathbf{g}_{\text{boundary}}^n.
\end{align}

\noindent
\textit{Cholesky decomposition} is the fastest direct solver \cite{LUC22} but requires $A^n = (A^n)^T \succ 0$ \cite{Sol15}, i.e., $A^n$ must be symmetric positive definite (SPD) for all time steps $n$ and all admissible control values $\gamma^{n+1/2}$. We show that this is indeed the case for our system.

\begin{lemma}[Symmetry and Positive Definiteness of $A^n$]
Let $\alpha > 0$, $\beta > 0$, and $\Delta t > 0$. For any admissible control value $\gamma^{n+1/2} > 0$, the coefficient matrix $A^n \in \mathbb{R}^{M \times M}$ defined in Equation (17) is symmetric positive definite (SPD) for all $n = 0, 1, \dots, N_t - 1$.
\end{lemma}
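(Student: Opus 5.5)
The plan is to split $A^n$ into a positive multiple of the identity plus a nonnegative multiple of $-L$, and to show that $-L$ is symmetric positive definite on the interior unknowns. Since $A^n = \left(1 + \tfrac{\Delta t \beta \gamma^{n+1/2}}{2}\right) I + \tfrac{\alpha \Delta t}{2}(-L)$ and the scalar coefficients are fixed for each $n$, both symmetry and positive definiteness of $A^n$ follow once they are established for $-L$.

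The first step is to fix the setting precisely. Here $L$ is the 5-point operator acting on the $M = (N_x-1)(N_y-1)$ interior unknowns, ordered lexicographically. The Dirichlet boundary values have been moved into $\mathbf{g}^n_{\text{boundary}}$, so $L$ itself is the homogeneous-Dirichlet discrete Laplacian. In Kronecker form, $L = I_y \otimes D_x + D_y \otimes I_x$, where $D_x = \Delta x^{-2}\,\mathrm{tridiag}(1,-2,1)$ and $D_y = \Delta y^{-2}\,\mathrm{tridiag}(1,-2,1)$ are the 1D Dirichlet second-difference matrices.

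Symmetry is then immediate. Each $D_x$, $D_y$ is symmetric, and $(X\otimes Y)^T = X^T \otimes Y^T$, so $L = L^T$ and hence $(A^n)^T = A^n$. Alternatively, one can observe that the coupling coefficient between nodes $(i,j)$ and $(i\pm1,j)$ is $\Delta x^{-2}$ in both directions, and similarly for the $y$-neighbours.

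For positive definiteness of $-L$, I would use summation by parts. For $\mathbf{v}\in\mathbb{R}^M$, extend it by zero to the boundary nodes. Then
\[
\mathbf{v}^T(-L)\mathbf{v} = \sum_{i,j} \frac{(v_{i+1,j}-v_{i,j})^2}{\Delta x^2} + \sum_{i,j}\frac{(v_{i,j+1}-v_{i,j})^2}{\Delta y^2} \ge 0,
\]
where the sums run over all edges, including those touching the boundary. Equality forces $\mathbf{v}$ to be constant along every grid line. Because the boundary values are zero, that constant must be $0$. Hence $-L \succ 0$.

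As a cross-check, the explicit eigenvalues give the same conclusion:
\[
\frac{4}{\Delta x^2}\sin^2\!\frac{k\pi}{2N_x} + \frac{4}{\Delta y^2}\sin^2\!\frac{l\pi}{2N_y} > 0 .
\]
This follows because the Kronecker sum of the 1D eigenpairs yields all eigenpairs of $-L$.

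To conclude, for any nonzero $\mathbf{v}$ we have
\[
\mathbf{v}^T A^n \mathbf{v} = \left(1+\tfrac{\Delta t\beta\gamma^{n+1/2}}{2}\right)\|\mathbf{v}\|^2 + \tfrac{\alpha\Delta t}{2}\,\mathbf{v}^T(-L)\mathbf{v} > 0.
\]
Indeed, the first term is strictly positive because $\beta,\gamma^{n+1/2},\Delta t>0$, and the second is nonnegative by the previous step. In fact the smallest eigenvalue of $A^n$ exceeds $1$. This holds uniformly in $n$, since only the scalar $\gamma^{n+1/2}$ changes between time steps. Strictly speaking, the identity term alone already gives positivity, so positive semidefiniteness of $-L$ suffices.

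The main obstacle is therefore not analytic but a matter of precise bookkeeping. One must make clear that $L$ is the interior operator with the boundary terms removed, so that the summation by parts produces no boundary remainder. Otherwise the stencil rows adjacent to $\partial\Omega$ could appear non-symmetric.
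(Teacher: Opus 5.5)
Your proof is correct and follows the same route as the paper. Both write $A^n = c_1 I + c_2(-L)$ with $c_1 = 1 + \frac{\Delta t\,\beta\gamma^{n+1/2}}{2}$ and $c_2 = \frac{\alpha\Delta t}{2}$, take symmetry from $L = L^T$, and take positive definiteness from the definiteness of $-L$; the paper argues through the shifted eigenvalues $c_1 + c_2\mu_k$, you through the equivalent quadratic form. The paper only asserts that $-L$ is SPD, and along the way calls the $\mu_k$ eigenvalues of $L$ while claiming they are positive; you prove it by summation by parts and correctly note that semidefiniteness of $-L$ already suffices because $c_1 > 1$.
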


\begin{proof}
We first show $A^n = (A^n)^T$. Since we have a uniform mesh with grid spacing $h$, the discrete Laplacian matrix $L$ derived from our five-point stencil is symmetric, i.e., $L = L^T$. Recall that in the resulting linear system $A^n\mathbf{T}^{n+1} = B^n\mathbf{T}^n + \mathbf{f}^n $ yielded by Crank-Nicolson, $A^n$ is of the form

\[
A^n = \left(1 + \frac{\Delta t \beta \gamma^{n+1/2}}{2}\right) I - \frac{\alpha \Delta t}{2} L.
\]

\noindent
Let $c_1 \triangleq 1 + \frac{\Delta t \beta \gamma^{n+1/2}}{2}$ and $c_2 \triangleq \frac{\alpha \Delta t}{2}$. Consequently, we have

\[
A^n = c_1 I - c_2L.
\]

\noindent
It is guaranteed that $c_1, c_2 > 0$ since $\alpha, \beta, \Delta t, \gamma^{n+\frac{1}{2}} > 0$ for all $n$. Moreover, since $I^T = I$ and $L^T = L$, it immediately follows that

\[
(A^n)^T = (c_1 I - c_2L)^T = (c_1I)^T - (c_2L)^T = c_1I^T - c_2L^T = c_1I - c_2L = A^n.
\]

\noindent
Hence, $A^n$ is symmetric. We last show that $A^n \succ 0$. Under Dirichlet boundary conditions, the discrete negative Laplacian $-L$ is a symmetric positive definite (SPD) matrix. Let $\{\mu_k\}_{k=1}^{M}$ denote the eigenvalues of $L$. The eigenvalues $\mu_k$ are strictly positive, i.e., $\mu_k > 0$ for all $k$. Given the form of $A^n$ in our linear system, the eigenvalues $\lambda_k(A^n)$ of $A^n$ are given by

\[
\lambda_k(A_n) = \left(1 + \frac{\Delta t \beta \gamma^{n+1/2}}{2}\right) I + \frac{\alpha \Delta t}{2} \mu_k.
\]

\noindent
Again, since $\alpha, \beta, \Delta t, \gamma^{n+\frac{1}{2}} > 0$ for all $n$, we can infer

\[
\lambda_k(A_n) > 0
\]

\noindent
for all $k = 1, \dots, M$. Hence, $A^n \succ 0$ for all time steps $n$ and all admissible control values $\gamma^{n+1/2}$, and since $A^n = (A^n)^T$, it follows that $A^n$ is SPD.
\end{proof}

\noindent
We thus can make use of Cholesky decomposition by virtue of this result. Because $A^n$ is guaranteed to be SPD across all time steps and optimization iterations, the linear system $A^n \mathbf{T}^{n+1} = B^n \mathbf{T}^n + \mathbf{f}^n$ can be efficiently and stably solved using sparse Cholesky factorization ($A^n = U^T U$), avoiding the need for pivoting. Moreover, we later show that $A^n$ is also the coefficient matrix for the backward discrete adjoint linear system 

\begin{equation}
\label{eq:adjoint_system}
A^n \boldsymbol{\lambda}^n = B^n \boldsymbol{\lambda}^{n+1} + \mathbf{d}^n.
\end{equation}

\noindent
This is highly advantageous from a computational efficiency standpoint, since the Cholesky factor $U$ computed during the forward state pass can be stored and reused directly in the backward adjoint pass. However, it should be noted that Cholesky decomposition can be computationally intractable, particularly from a memory complexity standpoint, for large sparse systems \cite{Zha23} -- in such scenarios, the conjugate gradient (CG) method \cite{HS52} is an excellent choice. 

\section{Backward Adjoint Solver}
Iterative gradient-based optimizers, such as L-BFGS or conjugate gradient, require evaluating the gradient vector 
\[ 
\nabla J = \left[ \frac{\partial J}{\partial P_1}, \dots, \frac{\partial J}{\partial P_N}, \frac{\partial J}{\partial \gamma} \right]^T 
\] 
at every iteration. In a discretized space-time domain with $N_t$ time steps and $N$ processor cores, the full decision vector $\mathbf{u}$ contains $D = N_t(N + 1)$ independent control variables. A typical simulation with $N_t = 100$ and $N = 4$ \cite{Hue+25} would thus yield $D=500$ independent control parameters. Standard finite difference gradient approximations require perturbing each control variable $u_k$ individually, evaluating $\nabla J$ via standard forward finite difference perturbations of the form 
\begin{equation} 
\frac{\partial J}{\partial u_k} \approx \frac{J(\mathbf{u} + \epsilon \mathbf{e}_k) - J(\mathbf{u})}{\epsilon}, \quad k = 1, \dots, D, 
\end{equation} 
for $\epsilon > 0$. This standard finite difference gradient approximation scales with an $O(D)$ computational complexity per iteration. For high-resolution thermal management over extended operational horizons, this approach becomes computationally intractable even for modestly-sized $D$. As mentioned before, to sidestep this cost, we employ the continuous adjoint state method \cite{Mar95}. By formulating a space-time Lagrangian functional and solving a single backward-in-time adjoint PDE for multiplier $\lambda(x,y,t)$, the exact continuous gradient $\nabla J$ is evaluated at a fixed computational cost of \textit{exactly two PDE solves}, one forward state solve and one backward adjoint solve, per iteration, completely independent of the control parameter dimension $D$. Moreover, by virtue of Lemma 3.1, the discrete backward adjoint system reuses the exact Cholesky factor $U$ of $A^n$, resulting in even greater computational efficiency. \\
\\
In general form, a backward adjoint solver is constructed by defining a continuous Lagrangian functional \(\mathcal{L}\) that adjoins the objective cost functional \(J\) to the governing PDE constraint using an adjoint multiplier field \(\lambda(x,y,t)\),

\begin{equation}
\mathcal{L}(u,\lambda,\theta) = J(u,\theta) - \int_{0}^{T}\int_{\Omega}\lambda(x,y,t)(\frac{\partial u}{\partial t} -\alpha \nabla^2 u - f)d\Omega dt,
\end{equation}

\noindent
where \(J(u, \theta)\) is the objective cost functional, \(u(x,y,t)\) is the forward state variable, \(\lambda(x,y,t)\) is the adjoint multiplier field (or Lagrange multiplier), and \(\theta \) represents the design variables or parameters. The Lagrangian for our particular thermal hardware/software co-design optimal control framework takes the form

\begin{equation}
\begin{aligned}
\mathcal{L}(T, P_c, \gamma, \lambda) = J(T, P_c, \gamma) - \int_{0}^{t_f} \iint_{\Omega} \lambda(x,y,t) \Bigg[ &\frac{\partial T}{\partial t} - \alpha \nabla^2 T + \beta \gamma(t) (T - T_{\text{amb}}) \\
&- \sum_{c=1}^{N} P_c(t) \chi_{\Omega_c}(x,y) \Bigg] dx \, dy \, dt.
\end{aligned}
\end{equation}

\noindent
We derive the adjoint PDE system as follows:

\begin{theorem}[Adjoint PDE System for Thermal Optimal Control]
\label{thm:adjoint_pde}
Consider the continuous Lagrangian functional $\mathcal{L}(T, P_c, \gamma, \lambda)$ defined in Equation~(4.3) with state $T(x,y,t)$ subject to nonhomogeneous Dirichlet boundary condition $T(x,y,t) = T_0$ on $\partial \Omega \times [0,t_f]$ and initial condition $T(x,y,0) = T_0$ on $\Omega$. The continuous adjoint multiplier field $\lambda(x,y,t)$ that eliminates all state variation dependencies $\delta T$ satisfies the backward parabolic adjoint PDE system
\begin{align}
-\frac{\partial \lambda}{\partial t} &= \alpha \nabla^2 \lambda - \beta \gamma(t) \lambda + 2 w_3 (T - T_0), \quad (x,y) \in \Omega, \; t \in [0, t_f), \label{eq:pde_adjoint} \\[1ex]
\lambda(x,y,t_f) &= 0, \quad (x,y) \in \Omega, \label{eq:adj_terminal} \\[1ex]
\lambda(x,y,t) &= 0, \quad (x,y) \in \partial\Omega, \; t \in [0, t_f].
\end{align} 
\end{theorem}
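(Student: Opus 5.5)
The proof is a first-variation (Gateaux derivative) computation of the Lagrangian in Equation (4.3) with respect to the state $T$. The controls $P_c,\gamma$ and the multiplier $\lambda$ are held fixed. I then choose $\lambda$ so that every term depending on the state variation $\delta T$ drops out.

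\textbf{Step 1: admissible variations.} Perturb $T \mapsto T + \epsilon\,\delta T$. Admissible variations must preserve the data of the state problem. Since $T = T_0$ is prescribed on $\partial\Omega\times[0,t_f]$ and at $t=0$, we get $\delta T = 0$ on $\partial\Omega\times[0,t_f]$ and $\delta T(\cdot,\cdot,0)=0$. The values $\delta T(\cdot,\cdot,t_f)$ in $\Omega$ and the normal derivative $\partial_n \delta T$ on $\partial\Omega$ remain arbitrary. Differentiating at $\epsilon=0$, and noting that the source and $T_{\text{amb}}$ terms do not depend on $T$, gives
\[
\delta_T\mathcal{L} = \int_0^{t_f}\!\!\iint_\Omega 2w_3(T-T_0)\,\delta T \,dx\,dy\,dt - \int_0^{t_f}\!\!\iint_\Omega \lambda\Big[\partial_t\delta T - \alpha\nabla^2\delta T + \beta\gamma(t)\,\delta T\Big]dx\,dy\,dt .
\]

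\textbf{Step 2: move derivatives off $\delta T$.} This is the main step. In time, integrate by parts:
\[
\int_0^{t_f}\lambda\,\partial_t\delta T\,dt = \big[\lambda\,\delta T\big]_{0}^{t_f} - \int_0^{t_f}\partial_t\lambda\,\delta T\,dt .
\]
The contribution at $t=0$ vanishes because $\delta T(0)=0$. In space, apply Green's second identity:
\[
\iint_\Omega \lambda\nabla^2\delta T = \iint_\Omega \delta T\,\nabla^2\lambda + \oint_{\partial\Omega}\big(\lambda\,\partial_n\delta T - \delta T\,\partial_n\lambda\big)\,ds .
\]
The term $\delta T\,\partial_n\lambda$ vanishes because $\delta T=0$ on $\partial\Omega$. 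Collecting everything,
\[
\delta_T\mathcal{L} = \int_0^{t_f}\!\!\iint_\Omega\Big[2w_3(T-T_0) + \partial_t\lambda + \alpha\nabla^2\lambda - \beta\gamma\lambda\Big]\delta T \;-\; \iint_\Omega \lambda(\cdot,\cdot,t_f)\,\delta T(\cdot,\cdot,t_f) \;-\; \alpha\int_0^{t_f}\!\!\oint_{\partial\Omega}\lambda\,\partial_n\delta T\,ds\,dt .
\]
Here the sign of the boundary term follows from the $-\alpha\nabla^2$ appearing inside the constraint. Tracking this sign, together with the overall minus sign in front of the constraint integral, is where errors are most likely.

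\textbf{Step 3: choose $\lambda$ to annihilate every term.} The interior variation $\delta T$, the terminal trace $\delta T(t_f)$ and the boundary flux $\partial_n\delta T$ are independent and arbitrary. By the fundamental lemma of the calculus of variations, each coefficient must vanish separately:
\begin{itemize}
\item The interior bracket gives $-\partial_t\lambda = \alpha\nabla^2\lambda - \beta\gamma(t)\lambda + 2w_3(T-T_0)$, which is (\ref{eq:pde_adjoint}).
\item The terminal term gives $\lambda(x,y,t_f)=0$, which is (\ref{eq:adj_terminal}).
\item The flux term gives $\lambda=0$ on $\partial\Omega\times[0,t_f]$.
\end{itemize}
This last condition is the homogeneous version of the state's Dirichlet condition. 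The nonhomogeneity $T_0$ disappears because it is fixed data, so it carries no variation.

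Finally, I would note well-posedness of the result. Under the substitution $s = t_f - t$ the system becomes a forward heat equation with nonnegative reaction coefficient $\beta\gamma\ge 0$, homogeneous Dirichlet data and zero initial data. It therefore defines $\lambda$ uniquely, which justifies calling it the backward parabolic adjoint system.
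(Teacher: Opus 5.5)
Your proposal is correct and runs the same computation as the paper: first variation in $T$, time integration by parts using $\delta T(\cdot,\cdot,0)=0$, Green's second identity using $\delta T=0$ on $\partial\Omega$, and the fundamental lemma applied to the coefficient of $\delta T$.

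The real difference is how the side conditions arise. The paper \emph{imposes} $\lambda(\cdot,\cdot,t_f)=0$ and $\lambda|_{\partial\Omega}=0$ because they make two terms vanish: the endpoint term $\iint_\Omega \lambda(\cdot,\cdot,t_f)\,\delta T(\cdot,\cdot,t_f)\,dx\,dy$ and the flux term $\oint_{\partial\Omega}\lambda\,\partial_n\delta T\,dS$. Strictly, that only shows these conditions are sufficient. You instead leave $\delta T(\cdot,\cdot,t_f)$ and $\partial_n\delta T|_{\partial\Omega}$ free, and \emph{derive} both conditions as necessary for $\delta_T\mathcal{L}=0$ under all admissible variations. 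That matches what the theorem's wording asserts (``the $\lambda$ that eliminates all state variation dependencies satisfies\ldots'').

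To make the ``independent and arbitrary'' step airtight, argue in sequence:
\begin{enumerate}
\item take $\delta T$ compactly supported in $\Omega\times(0,t_f)$ to obtain the interior equation;
\item then take variations with arbitrary trace at $t_f$ to obtain the terminal condition;
\item then take variations vanishing at $t_f$ with arbitrary normal derivative on $\partial\Omega$ to obtain the boundary condition.
\end{enumerate}
Your well-posedness remark via $s=t_f-t$ is an addition the paper does not supply.

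One small slip: the flux term should carry $+\alpha$, not $-\alpha$. It comes from the same quantity $+\alpha\int_0^{t_f}\iint_\Omega\lambda\,\nabla^2\delta T\,dx\,dy\,dt$ that produces the $+\alpha\nabla^2\lambda$ in your bracket. This is harmless, because you only use the vanishing of its coefficient.
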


\begin{proof}
Recall the tracking cost functional $J(T, P_c, \gamma)$. Taking the first variation of the Lagrangian $\mathcal{L}$ with respect to the state variable $T$ in the direction of an arbitrary variation $\delta T$ yields

\begin{equation}
\delta_T \mathcal{L} = \int_{0}^{t_f} \iint_{\Omega} 2 w_3 \left( T(x,y,t) - T_0 \right) \delta T \, dx \, dy \, dt - \mathcal{L}_2,
\end{equation}

\noindent
where the constraint variation integral $\mathcal{L}_2$ is given by

\begin{equation}
\mathcal{L}_2 \triangleq \int_{0}^{t_f} \iint_{\Omega} \lambda \left[ \frac{\partial (\delta T)}{\partial t} - \alpha \nabla^2 (\delta T) + \beta \gamma(t) \delta T \right] dx \, dy \, dt.
\end{equation}

\noindent
We split $\mathcal{L}_2$ into three separate integrals:

\begin{align*}
\mathcal{L}_2 \triangleq \iint_{\Omega} \left( \int_{0}^{t_f} \lambda \frac{\partial (\delta T)}{\partial t} \, dt \right) dx \, dy - \alpha \int_{0}^{t_f} \iint_{\Omega} \lambda \nabla^2 (\delta T) \, dx \, dy \, dt + \\ \beta \int_{0}^{t_f} \iint_{\Omega} \gamma(t) \lambda \delta T \, dx \, dy \, dt.
\end{align*}

\noindent
Next, we apply integration by parts to the temporal derivative term in Equation (4.8),

\begin{equation}
\int_{0}^{t_f} \lambda \frac{\partial (\delta T)}{\partial t} \, dt = \Big[ \lambda(x,y,t) \delta T(x,y,t) \Big]_{t=0}^{t=t_f} - \int_{0}^{t_f} \frac{\partial \lambda}{\partial t} \delta T \, dt.
\end{equation}

\noindent
Because the initial temperature $T(x,y,0) = T_0$ is fixed, the state variation at the initial time vanishes, so $\delta T(x,y,0) = 0$. Enforcing the terminal condition $\lambda(x,y,t_f) = 0$ on $\Omega$ eliminates the upper boundary evaluation as well, yielding $-\int_{0}^{t_f} \frac{\partial \lambda}{\partial t} \delta T \, dt$. For the spatial Laplacian term, applying Green's second identity over $\Omega$ gives

\begin{equation}
\iint_{\Omega} \lambda \nabla^2 (\delta T) \, dx \, dy = \iint_{\Omega} \delta T \nabla^2 \lambda \, dx \, dy + \oint_{\partial\Omega} \left( \lambda \frac{\partial (\delta T)}{\partial n} - \delta T \frac{\partial \lambda}{\partial n} \right) dS.
\end{equation}

\noindent
Since $T(x,y,t) = T_0$ is fixed on the physical boundary $\partial\Omega$, we have $\delta T = 0$ on $\partial\Omega$. Enforcing homogeneous Dirichlet boundary conditions for the adjoint multiplier $\lambda(x,y,t) = 0$ for $(x,y) \in \partial\Omega$ ensures that the boundary integral vanishes identically, i.e., 

\begin{equation}
\oint_{\partial\Omega} \left( \overbrace{\lambda}^{=0} \frac{\partial (\delta T)}{\partial n} - \overbrace{\delta T}^{=0} \frac{\partial \lambda}{\partial n} \right) dS = 0.
\end{equation}

\noindent
Substituting these integrated expressions back and collecting all terms proportional to $\delta T$ yields

\begin{equation}
\delta_T \mathcal{L} = \int_{0}^{t_f} \iint_{\Omega} \left[ 2 w_3 (T - T_0) + \frac{\partial \lambda}{\partial t} + \alpha \nabla^2 \lambda - \beta \gamma(t) \lambda \right] \delta T \, dx \, dy \, dt.
\end{equation}

\noindent
To guarantee that $\delta_T \mathcal{L} = 0$ for any arbitrary state variation $\delta T$, the integrand multiplying $\delta T$ must vanish identically everywhere in $\Omega \times [0, t_f)$. Rearranging this vanishing condition yields

\[
-\frac{\partial \lambda}{\partial t} = \alpha \nabla^2 \lambda - \beta \gamma(t) \lambda + 2 w_3 (T - T_0).
\]
\end{proof}

\noindent
With the adjoint PDE system derived, we can derive closed forms of each of $\nabla_{P_c(t)} J$ and $\nabla_{\gamma(t)} J$, respectively, which will, in turn, act as inputs for our L-BFGS optimizer.

\begin{theorem}[Analytical Control Gradients via Continuous Adjoint States]
Let $\lambda(x,y,t)$ be the unique solution to the continuous backward adjoint PDE system shown in Equations (4.4)-(4.6). The exact functional gradients of the objective $J(T, P_c, \gamma)$ with respect to the core power controls $P_c(t)$ (for $c = 1, \dots, N$) and the convective cooling control $\gamma(t)$ over $t \in [0, t_f]$ are given by
\begin{align}
\nabla_{P_c(t)} J &= 2 w_1 \left( P_c(t) - P_{\text{target}} \right) - \iint_{\Omega_c} \lambda(x,y,t) \, dx \, dy, \label{eq:grad_Pc_exact} \\[1ex]
\nabla_{\gamma(t)} J &= 2 w_2 \gamma(t) + \beta \iint_{\Omega} \lambda(x,y,t) \left( T(x,y,t) - T_{\text{amb}} \right) dx \, dy.
\end{align}
\end{theorem}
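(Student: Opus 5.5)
The plan is to compute the reduced gradient through the linearized state equation and a duality identity with the adjoint of Theorem~\ref{thm:adjoint_pde}. I do not go through stationarity of $\mathcal{L}$ alone, because that route hides where the signs come from. Fix admissible controls, let $T$ be the state and $\lambda$ the solution of (4.4)--(4.6). Perturb $P_c \mapsto P_c + \epsilon\,\delta P_c$ and $\gamma \mapsto \gamma + \epsilon\,\delta\gamma$. The G\^ateaux derivative $\delta T$ of the state satisfies
\[
\partial_t \delta T - \alpha \nabla^2 \delta T + \beta\gamma\,\delta T = \sum_{c=1}^N \delta P_c\,\chi_{\Omega_c} - \beta\,\delta\gamma\,(T - T_{\text{amb}}),
\]
with $\delta T = 0$ on $\partial\Omega$ and $\delta T(\cdot,0)=0$, since the Dirichlet datum and the initial datum do not depend on the controls.

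Differentiating (4.1) gives
\[
\delta J = \int_0^{t_f}\sum_c 2w_1(P_c-P_{\text{target}})\,\delta P_c\,dt + \int_0^{t_f} 2w_2\gamma\,\delta\gamma\,dt + \int_0^{t_f}\!\!\iint_\Omega 2w_3(T-T_0)\,\delta T\,dx\,dy\,dt.
\]
In the last term I substitute $2w_3(T-T_0) = -\partial_t\lambda - \alpha\nabla^2\lambda + \beta\gamma\lambda$ from (4.4). I then repeat exactly the two integrations by parts used in the proof of Theorem~\ref{thm:adjoint_pde}:
\begin{itemize}
\item in time, using $\lambda(\cdot,t_f)=0$ and $\delta T(\cdot,0)=0$;
\item in space, using Green's second identity with $\lambda=\delta T=0$ on $\partial\Omega$.
\end{itemize}
This converts the state term into $\int_0^{t_f}\!\iint_\Omega \lambda\,(\partial_t\delta T - \alpha\nabla^2\delta T + \beta\gamma\,\delta T)$. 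Inserting the right-hand side of the linearized equation eliminates $\delta T$ completely. Reading off the coefficients of $\delta P_c(t)$ and $\delta\gamma(t)$ in the $L^2(0,t_f)$ pairing then gives the two gradients. The restriction of $\chi_{\Omega_c}$ turns $\iint_\Omega$ into $\iint_{\Omega_c}$, and $\gamma$ being spatially constant lets it be pulled out.

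The main obstacle, and the step I expect to fail as worded, is the sign of the adjoint terms. Carried out as above with the $\lambda$ of (4.4)--(4.6), the duality identity gives
\[
\nabla_{P_c}J = 2w_1(P_c-P_{\text{target}}) + \iint_{\Omega_c}\lambda, \qquad \nabla_\gamma J = 2w_2\gamma - \beta\iint_\Omega \lambda\,(T-T_{\text{amb}}).
\]
These are the opposite signs to those displayed. The same result follows from differentiating the Lagrangian (4.3) directly in $P_c$ and $\gamma$: the $-\lambda[\cdots]$ convention contributes $+\lambda\chi_{\Omega_c}$ and $-\beta\lambda(T-T_{\text{amb}})$.

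A sanity check supports the derived signs. When $T>T_0$ the adjoint source in (4.4) is nonnegative, so by the parabolic maximum principle run backward $\lambda\ge 0$. Increasing $P_c$ should then raise the thermal cost, which requires $+\iint\lambda$.

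My plan is therefore to present the derivation cleanly and check it against the directional-derivative Taylor test the paper later uses. I will report that the displayed formulas hold exactly when the adjoint terms carry the signs above. I will not assert that the displayed formulas hold for the $\lambda$ of (4.4)--(4.6), because under the paper's own conventions they do not.
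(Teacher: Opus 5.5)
Your derivation is correct, and you are right not to assert the displayed formulas. Take $\lambda$ solving (4.4)--(4.6), i.e.\ the multiplier for the convention $\mathcal{L}=J-\int_0^{t_f}\!\iint_\Omega\lambda\,[\partial_t T-\mathcal{N}]$, where $\mathcal{N}$ is the right-hand side of the state equation as in the paper's proof. That convention is the one that yields the adjoint source $+2w_3(T-T_0)$, and with it the gradients are
\[
\nabla_{P_c(t)}J=2w_1\bigl(P_c(t)-P_{\text{target}}\bigr)+\iint_{\Omega_c}\lambda\,dx\,dy,\qquad \nabla_{\gamma(t)}J=2w_2\gamma(t)-\beta\iint_\Omega\lambda\,(T-T_{\text{amb}})\,dx\,dy .
\]
So the printed statement holds for $-\lambda$, the multiplier of the $J+\int\!\iint\lambda[\cdots]$ convention, not for the $\lambda$ it names.

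The paper's proof differentiates the Lagrangian in the controls and reaches the printed signs through two different slips:
\begin{itemize}
\item For $P_c$ it correctly writes $-\int_0^{t_f}\!\iint_\Omega\lambda\,\bigl[-\sum_c\delta P_c\,\chi_{\Omega_c}\bigr]$. After interchanging the integrals, however, it records the adjoint term as $-\iint_\Omega\lambda\,\chi_{\Omega_c}$ instead of $+\iint_\Omega\lambda\,\chi_{\Omega_c}$.
\item For $\gamma$ it places $-\beta\,\delta\gamma\,(T-T_{\text{amb}})$ inside the bracket. That is the variation of $\mathcal{N}$, not of $\partial_t T-\mathcal{N}$; the correct bracket is $+\beta\,\delta\gamma\,(T-T_{\text{amb}})$.
\end{itemize}
Algorithm~1 pairs the adjoint driver $\mathbf{d}^n=+2w_3\Delta t(\mathbf{T}^{n+1/2}-T_0\mathbf{1})$ with these printed gradient formulas, so the inconsistency carries over to the method as stated.

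On the route: the paper freezes $\lambda$, varies $\mathcal{L}$ in $(P_c,\gamma)$, and simply asserts that the result equals $\delta J$ because $\delta_T\mathcal{L}=0$. Your sensitivity/duality argument proves that step. It derives
\[
\int_0^{t_f}\!\iint_\Omega 2w_3(T-T_0)\,\delta T=\int_0^{t_f}\!\iint_\Omega\lambda\Bigl(\sum_c\delta P_c\chi_{\Omega_c}-\beta\,\delta\gamma\,(T-T_{\text{amb}})\Bigr)
\]
from the linearized state equation and the same two integrations by parts used in Theorem~\ref{thm:adjoint_pde}. Each adjoint sign is then dictated by the right-hand side of the linearized equation, which is exactly the bookkeeping the paper gets wrong.

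Your maximum-principle check is sound, and it has a companion for $\gamma$. When $T\ge T_0$ and $T\ge T_{\text{amb}}$, more cooling lowers $T$ and hence the thermal cost. This matches the nonpositive term $-\beta\iint_\Omega\lambda(T-T_{\text{amb}})$, since $\lambda\ge0$. The printed $+\beta\iint_\Omega\lambda(T-T_{\text{amb}})$ would say that extra cooling raises the thermal cost. The Taylor test is the right diagnostic: with the printed signs, $\Phi(\epsilon)$ would generically tend to a limit different from $1$.

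The one thing to state explicitly is the regularity you rely on. You need $\delta T$ to exist as a G\^ateaux derivative, which is a genuine linearization because $\gamma$ enters through the product $\gamma T$. You also need $\lambda$ and $\delta T$ smooth enough for Green's identity. The paper leaves these assumptions tacit as well.
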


\begin{proof}
Recall the full continuous Lagrangian functional

\[
\mathcal{L}(T, P_c, \gamma, \lambda) = J(T, P_c, \gamma) - \int_{0}^{t_f} \iint_{\Omega} \lambda(x,y,t) \left[ \frac{\partial T}{\partial t} - \mathcal{N}(T, P_c, \gamma) \right] dx \, dy \, dt,
\]

\noindent
where $\mathcal{N}(T, P_c, \gamma) = \alpha \nabla^2 T - \beta \gamma(t) (T - T_{\text{amb}}) + \sum_{c=1}^{N} P_c(t) \chi_{\Omega_c}(x,y)$. Since $\lambda(x,y,t)$ is chosen such that the state variation $\delta_T \mathcal{L} = 0$ holds identically, the first variation of $\mathcal{L}$ with respect to an arbitrary control perturbation vector $(\delta P_c, \delta \gamma)$ equals the total variation of $J$,

\begin{equation}
\delta \mathcal{L} = \sum_{c=1}^{N} \int_{0}^{t_f} \left( \nabla_{P_c(t)} J \right) \delta P_c(t) \, dt + \int_{0}^{t_f} \left( \nabla_{\gamma(t)} J \right) \delta \gamma(t) \, dt.
\end{equation}

\noindent
Taking the directional derivative of $\mathcal{L}$ with respect to core power $P_c(t)$ in the direction of a test perturbation $\delta P_c(t)$ gives

\begin{equation}
\begin{split}
\delta_{P_c} \mathcal{L} = \int_{0}^{t_f} 2 w_1 \left( P_c(t) - P_{\text{target}} \right) \delta P_c(t) \, dt - \\ \int_{0}^{t_f} \iint_{\Omega} \lambda(x,y,t) \left[ -\sum_{c=1}^{N} \delta P_c(t) \chi_{\Omega_c}(x,y) \right] dx \, dy \, dt.
\end{split}
\end{equation}

\noindent
Interchanging the order of spatial and temporal integration and isolating core $c$:
\begin{equation}
\delta_{P_c} \mathcal{L} = \int_{0}^{t_f} \left[ 2 w_1 \left( P_c(t) - P_{\text{target}} \right) - \iint_{\Omega} \lambda(x,y,t) \chi_{\Omega_c}(x,y) \, dx \, dy \right] \delta P_c(t) \, dt.
\end{equation}

\noindent
Using the definition of the spatial indicator function $\chi_{\Omega_c}(x,y) = 1$ for $(x,y) \in \Omega_c$ and $0$ elsewhere, the spatial integral restricts to the core footprint $\Omega_c$:
\begin{equation}
\delta_{P_c} \mathcal{L} = \int_{0}^{t_f} \left[ 2 w_1 \left( P_c(t) - P_{\text{target}} \right) - \iint_{\Omega_c} \lambda(x,y,t) \, dx \, dy \right] \delta P_c(t) \, dt.
\end{equation}

\noindent
Equating the term inside the bracket to the functional derivative $\frac{\delta \mathcal{L}}{\delta P_c(t)}$ yields Equation (33). Taking the directional derivative of $\mathcal{L}$ with respect to cooling parameter $\gamma(t)$ in the direction of a test perturbation $\delta \gamma(t)$ yields

\begin{equation}
\delta_{\gamma} \mathcal{L} = \int_{0}^{t_f} 2 w_2 \gamma(t) \delta \gamma(t) \, dt - \int_{0}^{t_f} \iint_{\Omega} \lambda(x,y,t) \left[ -\beta \delta \gamma(t) (T(x,y,t) - T_{\text{amb}}) \right] dx \, dy \, dt.
\end{equation}

\noindent
Factoring out the temporal perturbation scalar $\delta \gamma(t)$ gives

\begin{equation}
\delta_{\gamma} \mathcal{L} = \int_{0}^{t_f} \left[ 2 w_2 \gamma(t) + \beta \iint_{\Omega} \lambda(x,y,t) \left( T(x,y,t) - T_{\text{amb}} \right) dx \, dy \right] \delta \gamma(t) \, dt.
\end{equation}

\noindent
Equating the bracketed integrand to $\frac{\delta \mathcal{L}}{\delta \gamma(t)}$ yields Equation (34). 
\end{proof}

\noindent
We apply the same Crank-Nicolson discretization scheme for our backward adjoint solver. In doing so, we derive a linear system of the form of Equation (20). We have the backward adjoint PDE

\[
-\frac{\partial \lambda}{\partial t} - \alpha\nabla^2\lambda+\beta\gamma(t)\lambda = 2w_3(T-T_0).
\]

\noindent
Applying the backward finite difference approximation for $-\frac{\partial \lambda}{\partial t}$ yields

\[
-\frac{\partial \lambda}{\partial t} = -\frac{\lambda^{n+1}-\lambda^n}{\Delta t} = \frac{\lambda^n - \lambda^{n+1}}{\Delta t}.
\]

\noindent
To maintain numerical operator symmetry with the forward Crank--Nicolson state solver, we discretize the backward adjoint PDE across the temporal sub-interval $[t_n, t_{n+1}]$ of width $\Delta t = t_{n+1} - t_n$. All continuous time-dependent coefficient functions $\gamma(t)$ and state driver fields $T(x,y,t)$ are evaluated directly at the temporal midpoint $t_{n+1/2} = t_n + \frac{\Delta t}{2}$ via arithmetic averaging, while the backward time derivative is approximated via a central difference, i.e., 

\begin{equation}
\frac{\lambda^n - \lambda^{n+1}}{\Delta t} - \alpha (\nabla^2 \lambda)^{n+1/2} + \beta \gamma^{n+1/2} \lambda^{n+1/2} = 2 w_3 (T^{n+1/2} - T_0).
\end{equation}

\noindent
Substituting the Crank--Nicolson midpoint spatial operator averages yields

\begin{equation}
\begin{split}
\frac{\lambda^n - \lambda^{n+1}}{\Delta t} - \frac{\alpha}{2} \left( \nabla^2 \lambda^n + \nabla^2 \lambda^{n+1} \right) + \frac{\beta \gamma^{n+1/2}}{2} \left( \lambda^n + \lambda^{n+1} \right) = \\ w_3 \left( T^n - T_0 \right) + w_3 \left( T^{n+1} - T_0 \right).
\end{split}
\end{equation}

\noindent
Multiplying Equation (4.22) through by $\Delta t$ and replacing the spatial Laplacian $\nabla^2$ with the 5-point discrete matrix operator $L$:

\begin{equation}
\lambda^n - \lambda^{n+1} - \frac{\alpha \Delta t}{2} L \boldsymbol{\lambda}^n - \frac{\alpha \Delta t}{2} L \boldsymbol{\lambda}^{n+1} + \frac{\Delta t \beta \gamma^{n+1/2}}{2} \boldsymbol{\lambda}^n + \frac{\Delta t \beta \gamma^{n+1/2}}{2} \boldsymbol{\lambda}^{n+1} = \mathbf{d}^n,
\end{equation}

\noindent
where the adjoint driver vector $\mathbf{d}^n \in \mathbb{R}^M$ is defined as

\begin{equation}
\mathbf{d}^n = w_3 \Delta t \left( \mathbf{T}^n - T_0 \mathbf{1} \right) + w_3 \Delta t \left( \mathbf{T}^{n+1} - T_0 \mathbf{1} \right) = 2 w_3 \Delta t \left( \mathbf{T}^{n+1/2} - T_0 \mathbf{1} \right).
\end{equation}

\noindent
Grouping unknown adjoint vectors at time step $n$ on the left-hand side and known adjoint vectors at time step $n+1$ on the right-hand side:

\begin{equation}
\left( I + \frac{\Delta t \beta \gamma^{n+1/2}}{2} I - \frac{\alpha \Delta t}{2} L \right) \boldsymbol{\lambda}^n = \left( I - \frac{\Delta t \beta \gamma^{n+1/2}}{2} I + \frac{\alpha \Delta t}{2} L \right) \boldsymbol{\lambda}^{n+1} + \mathbf{d}^n.
\end{equation}

\noindent
Note that we obtain

\begin{align}
A^n &= \left( 1 + \frac{\Delta t \beta \gamma^{n+1/2}}{2} \right) I - \frac{\alpha \Delta t}{2} L, \\
B^n &= \left( 1 - \frac{\Delta t \beta \gamma^{n+1/2}}{2} \right) I + \frac{\alpha \Delta t}{2} L,
\end{align}

\noindent
which are identical to $A^n$ and $B^n$ in the forward solver. Thus, we have the backward linear system

\[
{A^n \boldsymbol{\lambda}^n = B^n \boldsymbol{\lambda}^{n+1} + \mathbf{d}^n}
\]

\noindent
precisely in the form of Equation (20).

\section{Adjoint-Driven Thermal Optimal Control Throttling Algorithm}

\noindent
We now have constructed the forward and backward solvers, their corresponding discretizations, and the analytical control gradients derived from the adjoint PDE system. As previously noted, L-BFGS is our optimization algorithm of choice for our optimal control method, ADACT (\textbf{A}djoint-\textbf{D}riven \textbf{A}daptive \textbf{C}o-Design \textbf{T}hermal Control). Recall (L-)BFGS is a quasi-Newton method that performs parameter updates of $x_k$ in the form of 

\begin{equation}
x_{k+1} = x_k - \eta_k[\nabla^2f(x_k)]^{-1}\nabla f(x_k),
\end{equation}

\noindent
where $\eta_k$ is a sequence of step sizes or learning rates, $\nabla f(x_k)$ is the gradient vector, and $[\nabla^2f(x_k)]^{-1}$ is the inverse Hessian matrix. In the context of our thermal optimal control framework, given that  the decision vector holds $D = N_t(N + 1)$ parameters in the control space, explicitly constructing or inverting the full Hessian matrix $\nabla^2 J \in \mathbb{R}^{D \times D}$ is computationally prohibitive. BFGS \cite{Bro70}, \cite{Fle70}, \cite{Gol70}, \cite{Sha70} was introduced to sidestep this computational bottleneck via construction of an implicit approximation of the inverse Hessian matrix $H_k \approx (\nabla^2 J^{(k)})^{-1}$ using only first-order information via the \textit{secant condition}, i.e.,

\begin{equation}
\mathbf{s}^{(k)} = \mathbf{u}^{(k)} - \mathbf{u}^{(k-1)}, \quad \mathbf{y}^{(k)} = \nabla J^{(k)} - \nabla J^{(k-1)},
\end{equation}

\noindent
which follows from a second-order Taylor series expansion around the current iterate. L-BFGS \cite{LN89} achieves even greater computational effiency than standard BFGS by restricting parameter updates to using only a sliding window of $m$ secant pairs -- by executing two sequential vector dot-product loops over these stored curvature pairs $(\mathbf{s}^{(k)}, \mathbf{y}^{(k)})$, L-BFGS computes the search direction $\mathbf{p}^{(k)}$ in $O(m D)$ linear time operations without ever forming a dense matrix (which is effectively $O(D)$ since $m \ll D)$. Once the descent direction $\mathbf{p}^{(k)}$ is established, the algorithm determines an optimal scalar step length $\alpha_k > 0$ via backtracking line search. To guarantee sufficient decrease in the objective functional and avoid unproductively small step sizes, $\alpha_k$ is selected to satisfy the standard strong Wolfe conditions \cite{NW06},

\begin{align}
J(\mathbf{u}^{(k)} + \alpha_k \mathbf{p}^{(k)}) &\le J(\mathbf{u}^{(k)}) + c_1 \alpha_k \left( \nabla J^{(k)} \right)^T \mathbf{p}^{(k)}, \label{eq:wolfe1} \\[1ex]
\left| \left( \nabla J(\mathbf{u}^{(k)} + \alpha_k \mathbf{p}^{(k)}) \right)^T \mathbf{p}^{(k)} \right| &\le c_2 \left| \left( \nabla J^{(k)} \right)^T \mathbf{p}^{(k)} \right|, \label{eq:wolfe2}
\end{align}

\noindent 
where $0 < c_1 < c_2 < 1$ are algorithmic line search constants (typically $c_1 = 10^{-4}$ and $c_2 = 0.9$). Moreover, our PDE-constrained optimal control problem is subject to the constraints that $P_{\text{min}} \leq P_c(t) \leq P_{\text{max}}$ for $c = 1,\dots, N$, $t \in [0,t_f]$ and $\gamma_\text{min} \leq \gamma(t) \leq \gamma_{\text{max}}$ for $t \in [0,t_f]$ -- physical hardware actuators operate within strict hardware bounds, i.e., core clock power densities cannot fall below idle baseline $P_{\min}$ or exceed maximum thermal limit $P_{\max}$, while cooling fan/pump efforts are physically bounded by $[\gamma_{\min}, \gamma_{\max}]$. Given these performance constraints, after taking a step along search direction components $\mathbf{p}_{P_c}^{(k)}$ and $\mathbf{p}_{\gamma}^{(k)}$, the updated control trajectories are projected element-wise back onto the admissible closed convex set $\mathcal{U}_{\text{ad}}$ using point-wise clipping, 

\begin{align}
\text{proj}_{[P_{\min}, P_{\max}]}(v) &= \max\Big( P_{\min}, \, \min\big( P_{\max}, \, v \big) \Big), \\[1ex]
\text{proj}_{[\gamma_{\min}, \gamma_{\max}]}(v) &= \max\Big( \gamma_{\min}, \, \min\big( \gamma_{\max}, \, v \big) \Big).
\end{align}

\noindent
Thus, we present our thermal optimal control algorithm as follows:

\begin{breakablealgorithm}{Algorithm 1: Adjoint-Driven Adaptive Co-Design Thermal Control (ADACT)}
    \begin{algorithmic}[1]
        \State \textbf{Require:} Mesh specifications $(N_x, N_y, \Delta t)$, physical parameters $(\alpha, \beta, T_{\text{amb}}, T_0)$, desired $P_{\text{target}}$, hyperparameters $w_1, w_2, w_3$, tolerances $\epsilon_{\text{opt}}$
        \State \textbf{Initialize:} Control vectors $P_c^{(0)}(t) \leftarrow P_{\text{target}}$, $\gamma^{(0)}(t) \leftarrow \gamma_{\text{base}}$ for $t \in [0, t_f]$
        \State Assemble spatial Laplacian operator $L$ and core indicator vectors
        \State \textbf{For} $k = 0, 1, 2, \dots$:
        \begin{quote}
            \State Set initial state $\mathbf{T}^0 = T_0 \mathbf{1}$ \\
            \State \textbf{For} $n=0$ up to $n = N_{t}-1$: \Comment{Foward State Solver}
            \begin{quote}
                \State  Assemble SPD matrix $A^n = \left(1 + \frac{\Delta t \beta \gamma^{n+1/2}}{2}\right) I - \frac{\alpha \Delta t}{2} L$ \\
                \State Assemble matrix $B^n$ and forcing vector $\mathbf{f}^n$ \\
                \State  Compute Cholesky factorization $A^n = U^T U$ \\
                \State  Solve $A^n \mathbf{T}^{n+1} = B^n \mathbf{T}^n + \mathbf{f}^n$ for state $\mathbf{T}^{n+1}$ \\
                \State Store $\mathbf{T}^{n+1}$ and Cholesky factor $U$
            \end{quote}
            \State Compute $J^{(k)} = J(\mathbf{T}, P_c^{(k)}, \gamma^{(k)})$
            \State If $\|\nabla J^{(k)}\| < \epsilon_{\text{opt}}$ or $|J^{(k)} - J^{(k-1)}| < \epsilon_{\text{opt}}$:
            \begin{quote}
                \textbf{break}
            \end{quote}
            \State Set terminal adjoint state $\boldsymbol{\lambda}^{N_t} = \mathbf{0}$\
            \State \textbf{For} $n = N_{t}-1$ down to $n=0$: \Comment{Backward Adjoint Solver}
            \begin{quote}
                \State Retrieve stored Cholesky factor $U$ for matrix $A^n$ \\
                \State Construct adjoint driver $\mathbf{d}^n = 2 w_3 \Delta t \left( \mathbf{T}^{n+1/2} - T_0 \mathbf{1} \right)$ \\
                \State  Solve $A^n \boldsymbol{\lambda}^n = B^n \boldsymbol{\lambda}^{n+1} + \mathbf{d}^n$ for adjoint state $\boldsymbol{\lambda}^n$
            \end{quote}
            \State \textbf{For} $n = 0$ up to $N_t$:
            \begin{quote}
                \State $\nabla_{P_c(t_n)} J \leftarrow 2 w_1 (P_c(t_n) - P_{\text{target}}) - \iint_{\Omega_c} \lambda(x,y,t_n) \, dx dy$ \\
                \State $\nabla_{\gamma(t_n)} J \leftarrow 2 w_2 \gamma(t_n) + \beta \iint_{\Omega} \lambda(x,y,t_n) (T(x,y,t_n) - T_{\text{amb}}) \, dx dy$
            \end{quote}
        \State  Compute search direction $\mathbf{p}^{(k)} = - H_k \nabla J^{(k)}$ via L-BFGS two-loop recursion \\
        \State Perform backtracking line search to find step length $\alpha_k$ satisfying Wolfe conditions \\
        \State $P_c^{(k+1)} \leftarrow \text{proj}_{[P_{\min}, P_{\max}]}\left( P_c^{(k)} + \alpha_k \mathbf{p}_{P_c}^{(k)} \right)$ \\
        \State $\gamma^{(k+1)} \leftarrow \text{proj}_{[\gamma_{\min}, \gamma_{\max}]}\left( \gamma^{(k)} + \alpha_k \mathbf{p}_{\gamma}^{(k)} \right)$
        \end{quote}
    \State \textbf{Return} $P_c^*(t) = P_c^{(k)}(t)$, $\gamma^*(t) = \gamma^{(k)}(t)$
    \end{algorithmic}
\end{breakablealgorithm}

\section{Quantitative Verification}

\noindent
To verify the numerical fidelity of our optimal control framework, we conduct rigorous quantitative verification prior to our numerical experiments. Verification is executed in two distinct stages: (1) confirming the second-order space-time convergence rates $O(\Delta x^2 + \Delta y^2 + \Delta t^2)$ of both the forward state solver and backward adjoint solver using the Method of Manufactured Solutions (MMS) \cite{SH07}, and (2) validating the mathematical consistency of the discrete continuous adjoint gradients $\nabla_{P_c} J$ and $\nabla_{\gamma} J$ using first-order directional derivative Taylor tests \cite{Jun+26}.

\subsection{Method of Manufactured Solutions (MMS)}
Analytical solutions to nonhomogeneous parabolic reaction-diffusion PDEs under space-time variant controls do not exist in general. Consequently, we construct exact synthetic solutions $T_{\text{mms}}(x,y,t)$ and $\lambda_{\text{mms}}(x,y,t)$ to benchmark mesh convergence rates. We define a smooth manufactured spatial-temporal temperature field that satisfies homogeneous Dirichlet boundary conditions on $\partial\Omega = [0, L_x] \times [0, L_y]$,

\begin{equation}
T_{\text{mms}}(x,y,t) = T_0 + \sin\left(\frac{\pi x}{L_x}\right) \sin\left(\frac{\pi y}{L_y}\right) \cos(\omega t),
\end{equation}

\noindent
with $L_x = L_y = 1$ and $\omega = 2\pi$. Substituting $T_{\text{mms}}$ into the continuous forward heat equation generates an analytic residual forcing function $S_{\text{mms}}(x,y,t)$

\begin{equation}
S_{\text{mms}}(x,y,t) \triangleq \frac{\partial T_{\text{mms}}}{\partial t} - \alpha \nabla^2 T_{\text{mms}} + \beta \gamma(t)(T_{\text{mms}} - T_{\text{amb}}) - \sum_{c=1}^{N} P_c(t) \chi_{\Omega_c}(x,y).
\end{equation}

\noindent
By appending $S_{\text{mms}}$ to the discrete forcing vector $\mathbf{f}^n$, the discrete error $\mathbf{e}^n = \mathbf{T}^n - \mathbf{T}_{\text{mms}}^n$ is evaluated across refined space-time grids $h \in \{\Delta x, \Delta y, \Delta t\}$. Discrete spatial $L_2$-errors at final time $t_f$ are computed as

\begin{equation}
\| \mathbf{e}(t_f) \|_{L_2} = \sqrt{\Delta x \Delta y \sum_{i=1}^{N_x-2} \sum_{j=1}^{N_y-2} \left( T_{i,j}^{N_t} - T_{\text{mms}}(x_i, y_j, t_f) \right)^2}.
\end{equation}

\noindent
The \textit{empirical order of convergence} (EOC) between successive mesh refinements $h_i$ and $h_{i+1}$ is calculated, using the $L_2$ norm, via

\begin{equation}
\text{EOC} = \frac{\text{ln}(\frac{\|\mathbf{e}_{h_i}\|}{\|\mathbf{e}_{h_{i+1}}\|})}{\text{ln}(\frac{h_i}{h_{i+1}})}.
\end{equation}

\noindent
Table 1 presents the $L_2$ errors and computed empirical orders of convergence (EOCs) across four systematically refined spatial and temporal meshes:

\begin{table}[htbp]
\centering
\caption{MMS Grid Convergence Study for the 2D Crank--Nicolson Forward Solver}
\label{tab:grid_convergence}
\small 
\setlength{\tabcolsep}{6pt} 
\begin{tabular}{cccccc}
\toprule
\textbf{Mesh} & \textbf{Time Steps} & \textbf{Spatial Step} & \textbf{Time Step} & \textbf{$L_2$-Error} & \textbf{EOC} \\ 
\textbf{($N_x \times N_y$)} & \textbf{($N_t$)} & \textbf{($h$)} & \textbf{($\Delta t$)} & & \\ 
\midrule
$16 \times 16$   & 20  & $6.67 \times 10^{-2}$ & $5.00 \times 10^{-2}$ & $1.2559 \times 10^{-3}$ & ---  \\
$32 \times 32$   & 40  & $3.23 \times 10^{-2}$ & $2.50 \times 10^{-2}$ & $2.8977 \times 10^{-4}$ & 2.12 \\
$64 \times 64$   & 80  & $1.59 \times 10^{-2}$ & $1.25 \times 10^{-2}$ & $6.9543 \times 10^{-5}$ & 2.06 \\
$128 \times 128$ & 160 & $7.87 \times 10^{-3}$ & $6.25 \times 10^{-3}$ & $1.7030 \times 10^{-5}$ & 2.03 \\
\bottomrule
\end{tabular}
\end{table}

\noindent
Note that as $h, \Delta t \to 0$, the $L_2$ error drops by a factor of $\approx 4$ with every $2\times$ mesh refinement, which coheres with the canonical second-order accuracy of Crank-Nicolson in both time and space. 

\subsubsection{Backward Adjoint Solver Verification via Taylor Tests}

\noindent
Likewise, for the backward adjoint PDE system, we prescribe a smooth manufactured multiplier field $\lambda_{\text{mms}}(x,y,t)$ satisfying terminal condition $\lambda_{\text{mms}}(x,y,t_f) = 0$ and boundary conditions $\lambda_{\text{mms}} = 0$ on $\partial\Omega$, i.e., 

\begin{equation}
\lambda_{\text{mms}}(x,y,t) = (t_f - t) \sin\left(\frac{\pi x}{L_x}\right) \sin\left(\frac{\pi y}{L_y}\right).
\end{equation}

\noindent
We evaluate the corresponding continuous adjoint residual source term

\begin{equation}
Q_{\text{mms}}(x,y,t) \triangleq -\frac{\partial \lambda_{\text{mms}}}{\partial t} - \alpha \nabla^2 \lambda_{\text{mms}} + \beta \gamma(t) \lambda_{\text{mms}} - 2 w_3 (T_{\text{mms}} - T_0). 
\end{equation}

\noindent
We want to show verification that the backward discrete adjoint solver recovers identical $O(\Delta x^2 + \Delta y^2 + \Delta t^2)$ second-order convergence rates, since the backward adjoint solver was likewise discretized via Crank-Nicolson. While solver convergence verifies (forward) state trajectories, gradient verification ensures that the discrete linear adjoint system $A^n \boldsymbol{\lambda}^n = B^n \boldsymbol{\lambda}^{n+1} + \mathbf{d}^n$ computes exact analytical derivatives of $J(\mathbf{u})$ with respect to the control trajectory vector $\mathbf{u} = [P_1(t), \dots, P_N(t), \gamma(t)]^T \in \mathbb{R}^D$. We perform a first-order Taylor series expansion of the objective functional $J(\mathbf{u})$ around an arbitrary initial control trajectory $\mathbf{u}_0$ in the direction of a normalized random perturbation vector $\mathbf{d} \in \mathbb{R}^D$ ($\|\mathbf{d}\|_2 = 1$). For scalar perturbation $\epsilon > 0$,

\begin{equation}
J(\mathbf{u}_0 + \epsilon \mathbf{d}) = J(\mathbf{u}_0) + \epsilon \left\langle \nabla J(\mathbf{u}_0), \mathbf{d} \right\rangle + O(\epsilon^2).
\end{equation}

\noindent
We define the first-order Taylor test ratio function $\Phi(\epsilon)$ as

\begin{equation}
\Phi(\epsilon) \triangleq \frac{\left| J(\mathbf{u}_0 + \epsilon \mathbf{d}) - J(\mathbf{u}_0) \right|}{\epsilon \left\langle \nabla J(\mathbf{u}_0), \mathbf{d} \right\rangle}.
\end{equation}

\noindent
This ratio defined in Equation (61) is a standard tool in numerical optimization and scientific computing used to verify the correctness of an analytical gradient \(\nabla J(\mathbf{u}_0)\). The premise of the first-order Taylor test is that, if the continuous adjoint gradient $\nabla J(\mathbf{u}_0)$ is accurate, then taking the limit as perturbation $\epsilon \to 0$ yields

\begin{equation}
\lim_{\epsilon \to 0} \Phi(\epsilon) = 1.0, \quad \text{and} \quad \left| \Phi(\epsilon) - 1.0 \right| = O(\epsilon).
\end{equation}

\noindent
Table 2 presents the results of our first-order Taylor tests as $\epsilon \to 0$ by factors of 10 from $1.0 \times 10^{-1}$ to $1.0 \times 10^{-6}$:

\begin{table}[htbp]
\centering
\caption{Directional Derivative Taylor Test Convergence Study}
\label{tab:taylor_test}
\small 
\setlength{\tabcolsep}{6pt} 
\begin{tabular}{cccc}
\toprule
\textbf{Step Size} & \textbf{Finite Difference} & \textbf{Taylor Ratio} & \textbf{Error Magnitude} \\ 
\textbf{($\epsilon$)} & \textbf{Approximation} & \textbf{($\Phi$)} & \textbf{($|\Phi - 1.0|$)} \\ 
\midrule
$1.0 \times 10^{-1}$ & 0.001618 & 1.00410600 & $4.1060 \times 10^{-3}$ \\
$1.0 \times 10^{-2}$ & 0.001612 & 1.00041058 & $4.1058 \times 10^{-4}$ \\
$1.0 \times 10^{-3}$ & 0.001611 & 1.00004106 & $4.1058 \times 10^{-5}$ \\
$1.0 \times 10^{-4}$ & 0.001611 & 1.00000410 & $4.1010 \times 10^{-6}$ \\
$1.0 \times 10^{-5}$ & 0.001611 & 1.00000032 & $3.1918 \times 10^{-7}$ \\
$1.0 \times 10^{-6}$ & 0.001611 & 0.99999893 & $1.0728 \times 10^{-6}$ \\
\bottomrule
\end{tabular}
\end{table}

\noindent
As $\epsilon \to 0$, $\Phi$ stabilizes around 1.0 and $|\Phi(\epsilon) - 1.0| \approx O(\epsilon)$ across step sizes, which shows that our discretized backward adjoint solver and cost functional $J(\mathbf{u})$ are aligned to machine precision.

\section{Numerical Experiments \& Pareto Optimality Analysis}

\noindent
Having validated the spatial-temporal order of accuracy and gradient exactness of our ADACT implementation, we now demonstrate ADACT's performance across multi-core thermal optimal control scenarios. The numerical experiments evaluate the ADACT's ability to maintain core operating temperatures below critical thermal throttling limits while maximizing execution throughput and minimizing active cooling energy. All simulations are conducted on a two-dimensional silicon die domain $\Omega = [0, L_x] \times [0, L_y]$ with $L_x = L_y = 10\text{ mm}$ and chip thickness $d_z = 500\,\mu\text{m}$. The spatial domain is discretized on a $16 \times 16$ mesh with a temporal step size $\Delta t = 0.02\text{ s}$ over a horizon $t_f = 1.0\text{ s}$ ($N_t = 50$). Thermal parameters reflect standard silicon properties: diffusivity $\alpha = 1.0 \times 10^{-4}\text{ m}^2/\text{s}$, baseline convective dissipation $\beta = 0.1\text{ s}^{-1}$, and ambient baseline temperature $T_0 = 20.0^\circ\text{C}$. The $2 \times 2$ quad-core footprint layout assigns Core 1 (Bottom-Left), Core 2 (Bottom-Right), Core 3 (Top-Left), and Core 4 (Top-Right).

\subsection{Asymmetric Thermal Workload Tracking}

\noindent
To evaluate ADACT under severe thermal stress, we simulate an asymmetric workload where diagonal cores (Core 1 and Core 4) experience sudden high compute demand ($P_{\text{target}} = 90.0\text{ W}$) while remaining cores (Core 2 and Core 3) remain idle ($P_{\text{target}} = 15.0\text{ W}$). Bounds on control variables are set to $P_c(t) \in [10.0, 100.0]\text{ W}$ and cooling rate $\gamma(t) \in [0.1, 5.0]$. Hyperparameters are set to $(w_1, w_2, w_3) = (1.0, 0.01, 0.005)$.

\begin{figure}[ht]
    \centering
    \includegraphics[width=0.75\linewidth]{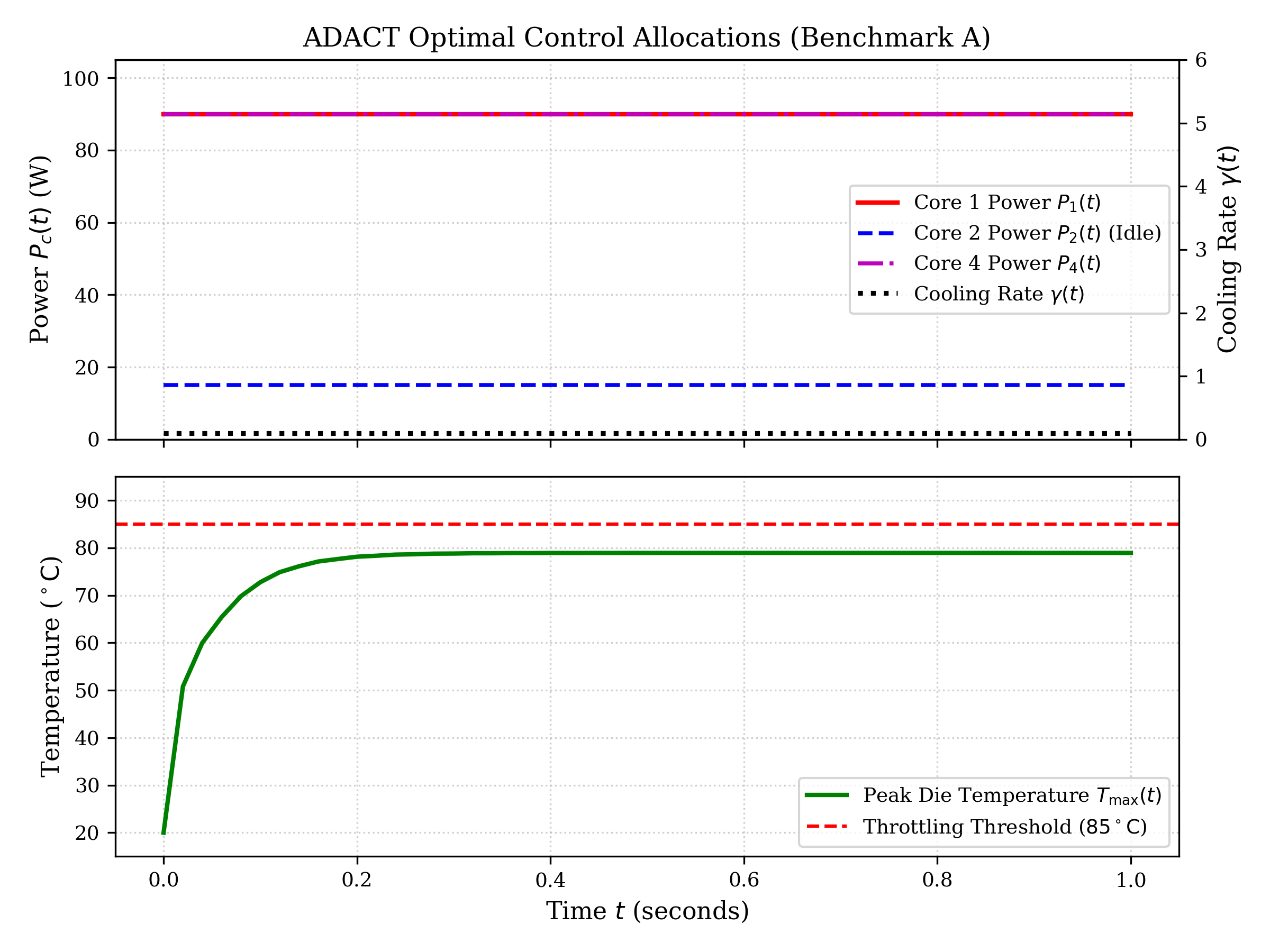}
    \caption{ADACT Throttling under Asymmetric Thermal Workload}
\end{figure}

\noindent
As illustrated in Figure 1, ADACT successfully sustains active cores at maximum requested throughput ($P_1(t) = P_4(t) = 90.0\text{ W}$) and idle cores at low power ($15.0\text{ W}$). Starting from ambient temperature $T_0 = 20.0^\circ\text{C}$, the peak die temperature $T_{\max}(t)$ heats up smoothly during the transient phase ($t \in [0.0, 0.2\text{ s}]$) before leveling off asymptotically at $78.8^\circ\text{C}$. Because the steady-state thermal distribution stabilizes below the hardware throttling limit ($85.0^\circ\text{C}$), ADACT maintains cooling energy at its lower bound ($\gamma(t) \approx 0.1$), avoiding excessive use of the convective cooling mechanism.

\subsection{3D Multi-Objective Pareto Sensitivity Analysis}

\noindent
Because throughput maximization ($J_{\text{perf}}$), cooling energy minimization ($J_{\text{cool}}$), and thermal stress reduction ($J_{\text{therm}}$) represent competing goals, no single trajectory simultaneously minimizes all cost components. Thus, we seek the \textit{Pareto frontier}, i.e., the configuration of values of $J_{\text{perf}}$, $J_{\text{cool}}$, and $J_{\text{therm}}$ whereby no single (sub-)cost can be improved without worsening at least one other. To map the multi-criterion trade-off space, we execute a parallelized grid sweep evaluating 45 parameter triples $(\hat{w}_1, \hat{w}_2, \hat{w}_3)$ across the unit simplex $\hat{w}_1 + \hat{w}_2 + \hat{w}_3 = 1.0$. Figure 2 presents the extracted non-dominated Pareto frontier in 3D objective space $(J_{\text{perf}}, J_{\text{cool}}, J_{\text{therm}})$, with point colorations indicating peak die temperature $T_{\max}$.

\begin{figure}[ht]
    \centering
    \includegraphics[width=1\linewidth]{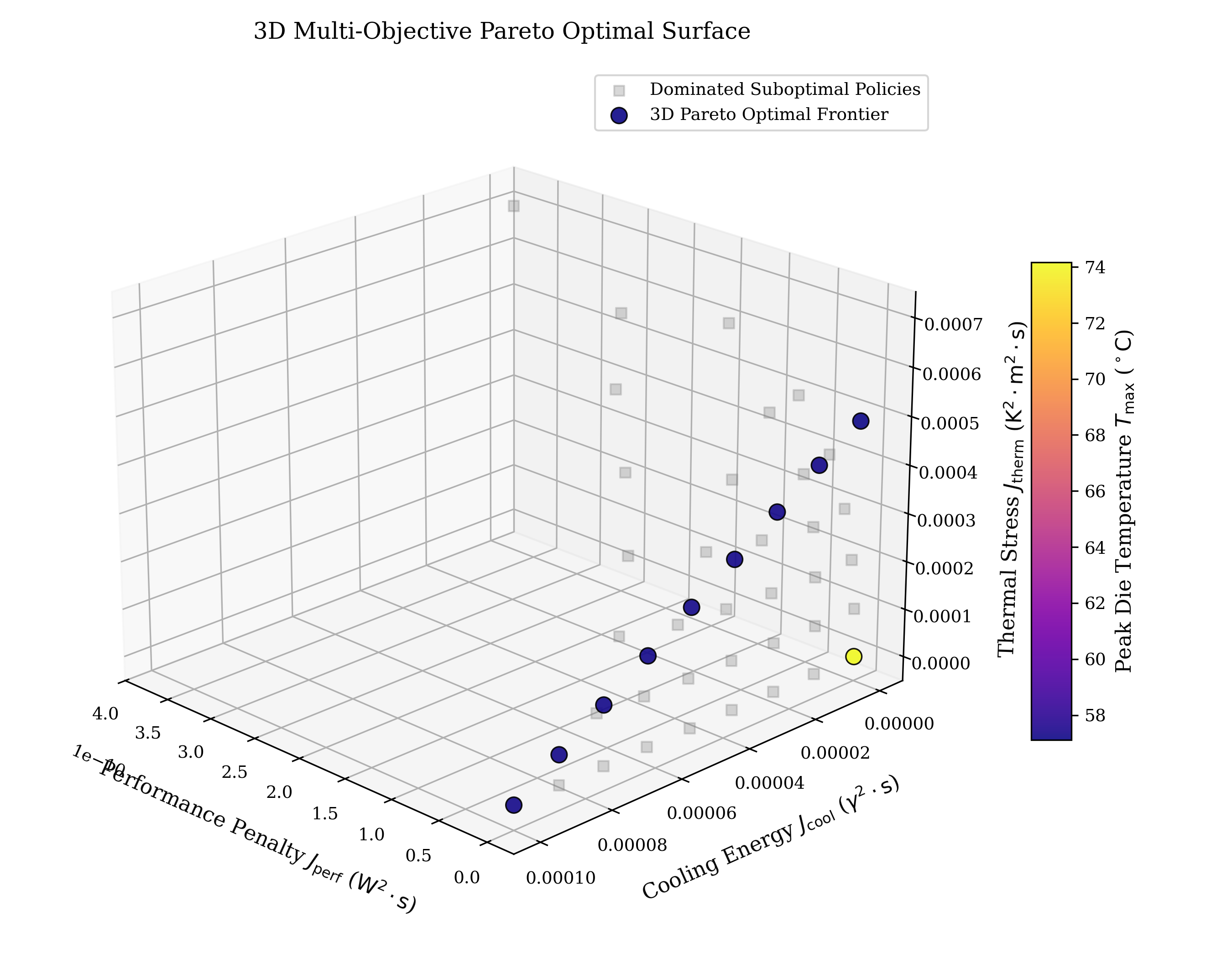}
    \caption{Pareto Frontier of $w_1$, $w_2$, $w_3$ for Cost Functional $J$}
\end{figure}

\noindent
The three-dimensional Pareto frontier shown in Figure 2 reveals three distinct operational regimes that represent the ultimate trade-offs of this system:

\begin{itemize}
    \item \textbf{The "Active Cooling" Regime:} This regime corresponds to the bottom-center of Figure 2. This represents the maximum cooling state. By expending the maximum allowable cooling energy (\(J_{\text{cool}}\)), the system successfully achieves top-tier performance (zero penalty) while keeping both the peak temperature at its lowest ($\sim 58^{\circ }C$) and thermal stress near zero, but comes at the cost of high electricity/power consumption for the convective cooling mechanism.
    \item \textbf{The "Hot \& Steady" Regime:} This regime corresponds to the bottom-right of Figure 2, typified by the bright yellow point in the three-dimensional $(\hat{w}_1, \hat{w}_2, \hat{w}_3)$ space. Here, cooling systems are essentially turned off (\(J_{\text{cool}} \approx 0\)) to save energy, while maintaining maximum performance. Because there is no active cooling, the Peak Die Temperature skyrockets to its maximum ($\sim 72^{\circ}-74^{\circ}C$). Interestingly, even though the temperature is high, the thermal stress (\(J_{\text{therm}}\)) remains near zero. This happens because the microprocessor is running at a stable, uniform high temperature without rapid fluctuations or severe spatial temperature gradients, as shown in our asymmetric thermal workload tracking numerical experiment in which running ADACT on a microprocessor stabilizes temperature at a level magnitude below peak die temperature $T_{\text{max}}$.
    \item \textbf{The "Thermal Throttling" Regime:} This regime corresponds to the subset of the Pareto optimal frontier towards the back-right wall of the plot, where \(J_{\text{therm}}\) increases. Here, active cooling is minimal (\(J_{\text{cool}} \approx  0\)), but the system is forced to intentionally throttle performance (i.e., a high performance penalty) to prevent the chip from overheating -- while throttling successfully keeps the peak temperature low ($\sim 58^{\circ}C$), the constant cycling or uneven power distribution across the die causes severe Thermal Stress (\(J_{\text{therm}}\)) to spike to its absolute highest levels. This is the worst-case scenario for mechanical component fatigue.
\end{itemize}

\noindent
The aforementioned operational regimes of ADACT thus paint a qualitative picture for computational practitioners that is more or less anticipated of the typical Pareto frontier for hardware thermal management: (1) optimizing for lower operational temperatures, lower thermal stress, and faster compute comes at the cost of higher usage of the convective cooling mechanism, (2) minimizing usage of the convective cooling mechanism and faster compute comes at the cost of higher operational temperatures, and (3) minimizing usage of the convective cooling mechanism and minimizing operational temperatures comes at the cost of slower compute.

\section{Discussion}

In this paper, we presented ADACT, an adjoint-driven optimal control algorithm for dynamic thermal management (DTM) in multi-core microprocessors modeled via 2D parabolic reaction-diffusion PDEs. By combining a second-order Crank--Nicolson temporal discretization with a backward adjoint solver, ADACT achieves machine-precision gradient evaluations as supported by our first-order Taylor test results. The reuse of SPD Cholesky factors across forward solves and backward adjoint passes is a computationally effective scheme that enables reliable convergence under (bound-constrained) L-BFGS optimization. \\
\\
Numerical experiments utilizing a $2 \times 2$ quad-core processor topology under severe asymmetric workloads demonstrated that ADACT meets compute demands while ensuring operational temperatures do not exceed the processor's throttling threshold. Moreover, our Pareto optimality analysis mapped the fundamental operational trade-off between active convective cooling effort and die thermal stress accumulation, demonstrating that peak die temperatures can be modulated predictably between $58^\circ\text{C}$ and $74^\circ\text{C}$ while sustaining $100\%$ of requested power throughput, albeit on a relatively small architecture. \\
\\
Promising future extensions of this work include extending the PDE solver architecture to three-dimensional multi-layer chip packages (incorporating silicon substrates, thermal interface materials, and integrated heat spreaders), empirically testing ADACT on much larger and more intricate architectures, and integrating ADACT into online model predictive control (MPC) loops with real-time hardware sensor feedback.

\section*{Acknowledgements}

Thanks to Dr. Hannah Kravitz for her supervision of this research and for teaching the graduate numerical analysis sequence at Portland State University (PSU). Likewise, thanks to Dr. Dacian N. Daescu, also of PSU, for advising me regarding this research, given his expertise in PDE-constrained optimization and the use of the adjoint state method.

\printbibliography 

@article{Gov+26,
    author = {V. Govindaraj and others},
    title = {Computational intelligence enabled hardware software co design for energy efficient Edge Artificial Intelligence},
    journal = {Discover Computing},
    volume = {29},
    year = {2026}
}

@unpublished{VZ26,
    author = {Mohammad Saleh Vahdatpour AND Yanqing Zhang},
    title = {Energy-Efficient Software–Hardware Co-Design for Machine Learning: From TinyML to Large Language Models},
    year = {2026},
    note = {arXiv 2603.23668}
}

@inproceedings{BM01,
    author = {David Brooks AND Margaret Martonosi},
    title = {Dynamic Thermal Management for High-Performance Microprocessors},
    booktitle = {Proceedings of the 7th International Symposium on High Performance Computer Architecture},
    year = {2001}
}

@inproceedings{BJ08,
  author    = {W. Lloyd Bircher AND Lizy K. John},
  title     = {Analysis of dynamic power management on multi-core processors},
  booktitle = {Proceedings of the 22nd Annual International Conference on Supercomputing},
  year = {2008}
}

@inproceedings{SGM19,
    author = {E. Strubell AND A. Ganesh AND A. McCallum},
    title = {Energy and policy considerations for deep learning in {NLP}},
    booktitle = {Proceedings of the 57th Annual Meeting of the Association for Computational Linguistics},
    year = {2019}
}

@article{CSRG09,
  author    = {Co\c{s}kun, Ay\c{s}e K{\i}v{\i}lc{\i}m and \v{S}imuni\'{c} Rosing, Tajana and Gross, Kenny C.},
  title     = {Utilizing Predictors for Efficient Thermal Management in Multiprocessor {SoCs}},
  journal   = {IEEE Transactions on Computer-Aided Design of Integrated Circuits and Systems},
  volume    = {28},
  number    = {10},
  pages     = {1503--1516},
  year      = {2009}
}

@article{TKK26,
    author = {Elena Tovbis AND Vladimir Krutikov AND Lev Kazakovtsev},
    title = {Algorithm for Scaling Variables in Minimization Methods},
    journal = {Algorithms},
    volume = {19},
    number = {2},
    year = {2026}
}

@inproceedings{KTK26,
    author = {Vladimir Krutikov AND Elena Tovbis AND Lev Kazakovtsev},
    title = {A Method of Scaling Variables in Large-Dimensional Problems},
    booktitle = {Mathematical Optimization Theory and Operations Research: 25th International Conference},
    year = {2026}
}

@article{Ple06,
    author = {R.E. Plessix},
    title = {A Review of the Adjoint State Method for Computing the Gradient of a Functional with Geophysical Applications},
    journal = {Geophysical Journal International},
    volume = {167},
    number = {2},
    pages = {495-503},
    year = {2006}
}

@article{STY24,
    author = {Mohsen Sadr AND Tony Tohme AND Kamal Youcef-Toum},
    title = {Data-Driven Discovery of {PDEs} via the Adjoint Method},
    journal = {Transactions on Machine Learning Research},
    year = {2024}
}

@article{DCS00,
    author = {Dacian N. Daescu AND Gregory R. Carmichael AND Adrian Sandu},
    title = {Adjoint Implementation of {Rosenbrock} Methods Applied to Variational Data Assimilation Problems},
    journal = {Journal of Computational Physics},
    volume = {165},
    number = {2},
    pages = {496-510},
    year = {2000}
}

@article{XDW21,
    author = {Cong Xiao AND Ya Deng AND Guangdong Wang},
    title = {Deep-Learning-Based Adjoint State Method: Methodology and Preliminary Application to Inverse Modeling},
    journal = {Water Resources Research},
    volume = {57},
    number = {2},
    year = {2021}
}

@article{CN47,
    author = {John Crank and Phyllis Nicolson},
    title = {A Practical Method for Numerical Evaluation of Solutions of Partial Differential Equations of the Heat-Conduction Type},
    journal = {Mathematical Proceedings of the Cambridge Philosophical Society},
    volume = {43},
    number = {1},
    pages = {50-67},
    year = {1947}
}

@article{LN89,
  author  = {Dong C. Liu and Jorge Nocedal},
  title   = {On the limited memory {BFGS} method for large scale optimization},
  journal = {Mathematical Programming},
  year    = {1989},
  volume  = {45},
  number  = {3},
  pages   = {503--528},
}

@book{Zab92,
    author = {Jerzy Zabczyk},
    title = {Mathematical Control Theory: An Introduction},
    publisher = {Birkh\"{a}user},
    year = {1992}
}

@article{Gol+99,
    author = {Gene H. Golub and others},
    title = {Tikhonov Regularization and Total Least Squares},
    journal = {SIAM Journal on Matrix Analysis and Applications},
    volume = {21},
    number = {1},
    pages = {185-194},
    year = {1999}
}

@book{Smi85,
    author = {G.D. Smith},
    title = {Numerical Solution of Partial Differential Equations: Finite Difference Methods},
    publisher = {Oxford Applied Mathematics and Computing Science Series},
    year = {1985}
}

@unpublished{LUC22,
    author = {Valentin LeFevre AND Tetsuzo Usui AND Marc Casas},
    title = {Optimization of the Sparse Multi-Threaded {Cholesky} Factorization for {A64FX}},
    year = {2022},
    note = {arXiv 2202.09288}
}

@book{Sol15,
    author = {Justin Solomon},
    title = {Numerical Algorithms},
    publisher = {A.K. Peters/CRC Press},
    year = {2015}
}

@unpublished{Zha23,
    author = {Lu Zhang},
    title = {Applications of Conjugate Gradient in {Bayesian} Computation},
    year = {2023},
    note = {arXiv:2308.14828}
}

@article{HS52,
    author = {Magnus R. Hestenes AND Eduard Stiefel},
    title = {Methods of Conjugate Gradients for Solving Linear Systems},
    journal = {Journal of Research of the National Bureau of Standards },
    volume = {49},
    number = {6},
    year = {1952}
}

@unpublished{Hue+25,
    author = {Rodrigo Huerta AND Mojtaba Abaie Shoushtary AND Jose Lorenzo Cruz AND Antonio Gonzalez},
    title = {Analyzing Modern {NVIDIA} {GPU} Cores},
    year = {2025},
    note = {arXiv 2503.20481}
}

@book{Mar95,
    author = {Guri I. Marchuk},
    title = {Adjoint Equations and Analysis of Complex Systems},
    publisher = {Springer},
    year = {1995}
}

@book{NW06,
    author = {Jorge Nocedal AND Stephen J. Wright},
    title = {Numerical Optimization},
    publisher = {Springer Series in Operations Research},
    year = {2006}
}

@article{Bro70,
  author    = {Broyden, Charles George},
  title     = {The convergence of a class of double-rank minimization algorithms},
  journal   = {Journal of the Institute of Mathematics and Its Applications},
  volume    = {6},
  number    = {1},
  pages     = {76--90},
  year      = {1970},
  publisher = {Oxford University Press}
}

@article{Fle70,
  author    = {Fletcher, Roger},
  title     = {A new approach to variable metric algorithms},
  journal   = {The Computer Journal},
  volume    = {13},
  number    = {3},
  pages     = {317--322},
  year      = {1970},
  publisher = {Oxford University Press}
}

@article{Gol70,
  author    = {Goldfarb, Donald},
  title     = {A family of variable-metric methods derived by variational means},
  journal   = {Mathematics of Computation},
  volume    = {24},
  number    = {109},
  pages     = {23--26},
  year      = {1970},
  publisher = {American Mathematical Society}
}

@article{Sha70,
  author    = {Shanno, David F.},
  title     = {Conditioning of quasi-{Newton} methods for function minimization},
  journal   = {Mathematics of Computation},
  volume    = {24},
  number    = {111},
  pages     = {647--656},
  year      = {1970},
  publisher = {American Mathematical Society}
}

@inproceedings{SH07,
  author    = {Shunn, L. AND Ham, F.},
  title     = {Method of manufactured solutions applied to variable-density flow solvers},
  booktitle = {Center for Turbulence Research Annual Research Briefs},
  pages     = {155--168},
  year      = {2007},
  publisher = {Stanford University}
}

@unpublished{Jun+26,
    author = {Junoh Jung AND Riccardo Balin AND Bethany Lusch AND Emil Constantinescu},
    title = {Diff-{NekRS}: A Scalable Differentiable Framework for Multi-Timestep Solver-in-the-Loop Training},
    year = {2026},
    note = {arXiv 2609.23208}
}
\end{document}